\theoremstyle{plain}
\newtheorem{theorem}                {Theorem}      [section]
\newtheorem{proposition}  [theorem]  {Proposition}
\newtheorem{corollary}    [theorem]  {Corollary}
\newtheorem{lemma}        [theorem]  {Lemma}
\newtheorem{remark}       [theorem]  {Remark}
\newtheorem*{theorem1}                {Theorem \ref{thm_red}}
\newtheorem*{theorem2}                {Theorem \ref{thm:int}}
\newtheorem*{theorem3}                {Corollary \ref{theorem3}}
\newtheorem*{theorem4}                {Theorem \ref{theorem4}}
\theoremstyle{definition}
\newtheorem{definition}   [theorem]  {Definition}
\DeclareMathOperator{\trace}{trace}
 \DeclareMathOperator{\id}{I}
\DeclareMathOperator{\Span}{span}
\DeclareMathOperator{\cst}{constant}
 \DeclareMathOperator{\im}{Im}
\numberwithin{equation}{section}
\begin{document}

\title[Surfaces with parallel mean curvature vector]
{Surfaces with parallel mean curvature in Sasakian space forms}

\author{Dorel~Fetcu}
\author{Harold~Rosenberg}

\address{Department of Mathematics and Informatics\\
Gh. Asachi Technical University of Iasi\\
Bd. Carol I no. 11 \\
700506 Iasi, Romania} \email{dfetcu@math.tuiasi.ro}

\curraddr{Department of Mathematics, Federal University of Bahia, Av.
Adhemar de Barros s/n, 40170--110 Salvador, BA, Brazil}

\address{Instituto Nacional de Matem\'atica Pura e Aplicada\\ Estrada Dona Castorina 110\\ 22460--320 Rio de
Janeiro, RJ, Brazil} \email{rosen@impa.br}

\thanks{The first author was supported by a fellowship, BJT 373672/2013--6, offered by CNPq, Brazil.}

\begin{abstract} We study the global geometry of surfaces in Sasakian space forms whose mean curvature vector is parallel in the normal bundle (these include the Riemannian Heisenberg space of dimension $2n+1$). We prove a codimension reduction theorem.  We introduce two holomorphic quadratic differentials on anti-invariant such surfaces and use them to obtain classification theorems.
\end{abstract}

\date{}

\subjclass[2010]{53A10, 53C42, 53C25}

\keywords{surfaces with parallel mean curvature vector field, Sasakian
space forms}

\maketitle

\section{Introduction}

There is considerable research on the geometry of constant mean curvature surface (cmc surfaces) in $3$-dimensional manifolds. In this paper we will consider surfaces in manifolds of dimension greater than $3$, whose mean curvature vector is parallel in the normal bundle (pmc surfaces).

The theory of pmc surfaces was developed by S.-T.~Yau \cite{Y}, D. Ferus \cite{DF} and J. Erbacher \cite{E}. Recently, H. Alencar, M. do Carmo and R. Tribuzy obtained structure theorems for pmc surfaces in $M^n\times\mathbb{R}$, where $M^n$ is a simply connected space form \cite{AdCT}.  The present authors developed the theory in complex and cosymplectic space forms \cite{F,FR}. M.~J.~Ferreira and R. Tribuzy continued this study in symmetric spaces \cite{FT}.

We now study pmc surfaces in simply connected Sasakian space forms $N^{2n+1}(c)$ of constant $\varphi$-sectional curvature $c$ (we will explain what this means in the next section). Remark that $N^{2n+1}(1)$ is isometric to the unit sphere $\mathbb{S}^{2n+1}$ and $N^{2n+1}(-3)$ is isometric to Heisenberg space. We recommend the book of D. E. Blair [2].

One of our main results is the following theorem. 

\begin{theorem1} Let $\Sigma^2$ be an isometrically immersed non-minimal pmc surface in a Sasakian space form $N^{2n+1}(c)$ with constant $\varphi$-sectional curvature $c\neq 1$. Then one of the following holds$:$
\begin{enumerate}
\item $\Sigma^2$ is an integral pseudo-umbilical surface and $n\geq 3$$;$ or

\item $\Sigma^2$ is not pseudo-umbilical and lies in a Sasakian space form $N^{11}(c)$.
\end{enumerate}
\end{theorem1}

We also study anti-invariant pmc surfaces in $N^{2n+1}(c)$ with $c\neq 1$ and mean curvature vector $\vec H\neq 0$. We introduce two holomorphic quadratic differentials on such surfaces and use them to understand their geometry. In particular, we classify all integral complete non-minimal pmc surfaces with non-negative Gaussian curvature when $n=3$ (dimension $7$). The theorem is (we will explain the terminology in Section \ref{s4}):

\begin{theorem2} Let $\Sigma^2$ be an isometrically immersed integral complete non-minimal surface in a Sasakian space form $N^7(c)$ with parallel mean curvature vector field $\vec H$ and with non-negative Gaussian curvature $K$. If $Q_1^{(2,0)}$ vanishes on the surface or, equivalently, if $\Sigma^2$ is pseudo-umbilical, then $|\vec H|^2\geq-(c+3)/4$ and one of the following holds$:$
\begin{enumerate}

\item $\Sigma^2$ is a cmc totally umbilical surface in a space form $M^3((c+3)/4)$, with constant sectional curvature $(c+3)/4$ and of dimension $3$, immersed in $N^7(c)$ as an integral submanifold$;$ or

\item $\Sigma^2$ is flat and it is the standard product $\gamma_1\times\gamma_2$, where $\gamma_1:\mathbb{R}\rightarrow N^7(c)$ is a Legendre helix of osculating order $4$ in $N^7(c)$ with curvatures 
$$
\kappa_1=\sqrt{a^2+|\vec H|^2},\quad \kappa_2=\frac{a\sqrt{1+|\vec H|^2}}{\sqrt{a^2+|\vec H|^2}},\quad\textnormal{and}\quad\kappa_3=\frac{|\vec H|\sqrt{1+|\vec H|^2}}{\sqrt{a^2+|\vec H|^2}},
$$
and $\gamma_2:\mathbb{R}\rightarrow N^7(c)$ is a Legendre circle in $N^7(c)$ with curvature $\kappa=\sqrt{a^2+|\vec H|^2}$, where $a^2=(c+3)/8+|\vec H|^2/2$.
\end{enumerate}
\end{theorem2}

\begin{theorem3} An integral pmc $2$-sphere in $N^7(c)$, with $\vec H\neq 0$, is a round sphere in a space form $M^3((c+3)/4)$.
\end{theorem3}

The paper is organized as follows. In Section \ref{sintro} we define Sasakian manifolds and state their curvature equations. We give the four models of the simply connected Sasakian space forms $N^{2n+1}(c)$ obtained by S. Tanno \cite{ST}.

In Section \ref{s3} we develop the study of the geometry of pmc surfaces in the space forms $N^{2n+1}(c)$. We then prove the reduction of codimension result, Theorem \ref{thm_red}.

In Section \ref{s4} we discuss anti-invariant pmc surfaces in $N^{2n+1}(c)$. We introduce two quadratic differentials and prove that their $(2,0)$-parts are holomorphic quadratic differentials on such surfaces. We define Hopf cylinders and discuss their geometry. We then prove Theorem \ref{thm:int}.

Finally, in Section \ref{s4}, we pursue the study of anti-invariant pmc surfaces and we prove the following non-existence theorem.

\begin{theorem4} There are no anti-invariant complete non-minimal non-pseudo-umbilical pmc surfaces with non-negative Gaussian curvature in a Sasakian space form $N^{2n+1}(c)$, with $c\neq 1$. In particular, there are no anti-invariant non-minimal pmc $2$-spheres in $N^{2n+1}(c)$.
\end{theorem4}

\noindent {\bf Acknowledgments.} The first author would like to thank the Department of Mathematics of the Federal University of Bahia in Salvador for providing a very stimulative work environment during the preparation of this paper.

\section{Preliminaries}\label{sintro}

A \textit{contact metric structure} on an odd-dimensional manifold
$N^{2n+1}$ is given by $(\varphi,\xi,\eta,\langle,\rangle)$, where $\varphi$ is
a tensor field of type $(1,1)$ on $N$, $\xi$ is a vector field,
$\eta$ is its dual $1$-form and $\langle,\rangle$ is a Riemannian metric such that
$$
\varphi^{2}U=-U+\langle U,\xi\rangle\xi,\quad
\langle\varphi U,\varphi V\rangle=\langle U,V\rangle-\eta(U)\eta(V),
$$
and 
$$
d\eta(U,V)=\langle U,\varphi V\rangle,
$$
for all tangent vector fields $U$ and $V$. Such a structure is
called {\it normal} if
$$
\mathcal N_{\varphi}(U,V)+2d\eta(U,V)\xi=0,
$$
where
$$
\mathcal N_{\varphi}(U,V)=[\varphi U,\varphi V]-\varphi \lbrack \varphi
U,V]-\varphi \lbrack U,\varphi V]+\varphi^{2}[U,V],
$$
is the Nijenhuis tensor field of $\varphi$.

\begin{definition} A normal contact metric manifold $(N,\varphi,\xi,\eta,g)$ is called a
\textit{Sasakian manifold}.
\end{definition}

Equivalently, a contact metric manifold is a Sasakian manifold if and only if
$$
(\nabla^N_{U}\varphi)V=\langle U,V\rangle\xi-\eta(V)U,
$$
for all tangent vector fields $U$ and $V$, where $\nabla^N$ is the Levi-Civita connection. It can be easily shown that on a Sasakian manifold we have
$\nabla^N_U\xi=-\varphi U$ (see ~\cite{B}). 

A submanifold $M$ of a Sasakian manifold $N^{2n+1}$ is called {\it anti-invariant} when $\varphi(TM)\subset NM$, where $NM$ is the normal bundle of $M$, and {\it integral} if $\eta(X)=0$ for all vector fields $X$ tangent to $M$. The dimension $m$ of an anti-invariant submanifold satisfies $m\leq n+1$ and that of an integral submanifold $m\leq n$. We also note that any integral submanifold of a Sasakian manifold is anti-invariant. An integral curve is called a {\it Legendre curve}.

Now, let $(N,\varphi,\xi,\eta,g)$ be a Sasakian manifold. The sectional
curvature of the $2$-plane generated by $U$ and $\varphi U$, where $U$
is a unit vector orthogonal to $\xi$, is called the
\textit{$\varphi$-sectional curvature} determined by $U$. A Sasakian
manifold with constant $\varphi$-sectional curvature $c$ is called a
\textit{Sasakian space form} and it is denoted by $N^{2n+1}(c)$.

The curvature tensor field of a Sasakian space form $N^{2n+1}(c)$ is given by
\begin{align}\label{eq:curv}
R^N(U,V)W=&\frac{c+3}{4}\Big\{\langle W,V\rangle U-\langle W,U\rangle V\Big\}+\frac{c-1}{4}\Big\{\eta(W)\eta(U)V\\\nonumber 
&-\eta(W)\eta(V)U+\langle W,U\rangle\eta(V)\xi-\langle W,V\rangle\eta(U)\xi\\\nonumber
&+\langle W,\varphi V\rangle\varphi U-\langle W,\varphi U\rangle\varphi
V+2\langle U,\varphi V\rangle\varphi W\Big\}.
\end{align}

In \cite{O}, it is proved that a Sasakian manifold is locally symetric if and only if it is of constant sectional curvature $1$. However, all Sasakian space forms are {\it locally $\varphi$-symmetric} spaces, i.e.,
$$
\varphi^2(\nabla^N_UR^N)(X,Y)Z=0,
$$
for all tangent vector fields $U$, $X$, $Y$ and $Z$ orthogonal to $\xi$. In order to characterize locally $\phi$-symmetric spaces, a very useful tool proved to be the affine connection $\bar\nabla$ introduced by M. Okumura, defined on a Sasakian manifold by
$$
\bar\nabla_UV=\nabla^N_UV+\mathcal{T}_UV,
$$
where $\mathcal{T}_UV=\langle U,\varphi V\rangle\xi-\eta(U)\varphi V+\eta(V)\varphi U$ (see \cite{O}). The torsion $\bar T(U,V)=2\mathcal{T}_UV$ of this connection does not vanish, but it is parallel with respect to $\bar\nabla$, and T.~Takahashi showed that a Sasakian manifold is locally $\varphi$-symmetric if and only if the curvature $\bar R$ of $\bar\nabla$ is parallel: $\bar\nabla\bar R=0$. Here $\bar R$ is given by
\begin{align*}
\bar R(U,V)W=&R^N(U,V)W+\eta(W)\{\eta(U)V-\eta(V)U\}+\langle\varphi V,W\rangle\varphi U\\\nonumber &-\langle\varphi U,W\rangle\varphi V+2\langle\varphi U,V\rangle\varphi W+\{\langle U,W\rangle\eta(V)-\langle V,W\rangle\eta(U)\}\xi.
\end{align*}
This is equivalent to
\begin{align}\label{symmetric}
(\nabla^N_UR^N)(X,Y)Z=&-\mathcal{T}_UR^N(X,Y)Z+R^N(\mathcal{T}_UX,Y)Z+R^N(X,\mathcal{T}_UY)Z\\\nonumber &+R^N(X,Y)\mathcal{T}_UZ,
\end{align}
for all tangent vector fields $U$, $X$, $Y$ and $Z$ (see \cite{T}). It is easy to verify that equation \eqref{symmetric} holds on any Sasakian space form.

Complete simply connected Sasakian space forms $N^{2n+1}(c)$ were classified by S.~Tanno in \cite{ST}, as follows:

\begin{itemize}

\item if $c>-3$,
then either $N^{2n+1}(c)$ is isometric to the unit sphere $\mathbb{S}^{2n+1}$
endowed with its canonical Sasakian structure, or $N(c)$ is isometric to
$\mathbb{S}^{2n+1}$ endowed with a deformed Sasakian structure, described in the following.

Let $\mathbb{S}^{2n+1}=\{z\in\mathbb{C}^{n+1}: \vert z\vert=1\}$ be
the unit $2n+1$-dimensional sphere endowed with its standard metric
field $\langle,\rangle_{0}$. Consider the following structure tensor fields on
$\mathbb{S}^{2n+1}$: $\xi_{0}=-\mathcal{J}z$ for each $z\in
\mathbb{S}^{2n+1}$, where $\mathcal{J}$ is the usual complex
structure on $\mathbb{C}^{n+1}$, and $\varphi_{0}=s\circ \mathcal{J}$, where
$s:T_{z}\mathbb{C}^{n+1}\to T_{z}\mathbb{S}^{2n+1}$ denotes the
orthogonal projection. Equipped with these tensors,
$\mathbb{S}^{2n+1}$ becomes a Sasakian space form with
$\varphi_{0}$-sectional curvature equal to $1$.

Next, consider a deformed structure on $\mathbb{S}^{2n+1}$, given by
$$
\eta=a\eta_{0},\quad\xi=\frac{1}{a}\xi_{0},\quad
\varphi=\varphi_{0},\quad \langle U,V\rangle=a\langle U,V\rangle_{0}+a(a-1)\eta_{0}(U)\eta_{0}(V),
$$
where $a$ is a positive constant. Then 
$(\mathbb{S}^{2n+1},\varphi,\xi,\eta,\langle,\rangle)$ is a Sasakian space form
with constant $\varphi$-sectional curvature $c=\frac{4}{a}-3>-3$.

\item if $c=-3$, then $N^{2n+1}(c)$ is isometric to the generalized Heisenberg group $\mathbb{R}^{2n+1}$ endowed with the following Sasakian structure. On $\mathbb{R}^{2n+1}$, with coordinates $(x^1,\ldots,x^n,y^1,\ldots,y^n,z)$, consider the vector field $\xi=2(\partial/\partial z)$, its dual $1$-form $\eta=(1/2)(dz-\sum_{i=1}^ny^idx^i)$, the tensor field $\varphi$ given by the matrix
$$
\left(\begin{array}{ccc}0&\delta_{ij}&0\\-\delta_{ij}&0&0\\0&y^j&0\end{array}\right),
$$
and the Riemannian metric $\langle,\rangle=(1/4)\sum_{i=1}^n((dx^i)^2+(dy^i)^2)+\eta\otimes\eta$. Then $(\mathbb{R}^{2n+1},\varphi,\xi,\eta,\langle,\rangle)$ becomes a Sasakian space form with constant $\varphi$-sectional curvature $c=-3$.

\item if $c<-3$, then $N^{2n+1}(c)$ is isometric to $B^{2n}\times\mathbb{R}$, where $B^{2n}$ is the unit ball in $\mathbb{C}^n$, with the Sasakian structure given by the $1$-form $\eta=\pi^{\ast}\omega+dt$ and the Riemannian metric $\langle,\rangle=\pi^{\ast}G+\eta\otimes\eta$, where $(J,G)$ is a K\"ahler structure with constant sectional holomorphic curvature $k<0$ and exact fundamental $2$-form $\Omega=d\omega$, $\pi:B^{2n}\times\mathbb{R}\rightarrow B^{2n}$ is the canonical projection, and $t$ is the coordinate on $\mathbb{R}$. Endowed with this structure, $B^{2n}\times\mathbb{R}$ becomes a Sasakian space form with constant $\varphi$-sectional curvature $c=k-3$.
\end{itemize}

Throughout our paper, we shall work in the above described spaces, that we will simply call Sasakian space forms.

\section{A reduction of codimension theorem}\label{s3}

Let $\Sigma^2$ be an isometrically immersed surface in a Sasakian space form $N^{2n+1}(c)$,  endowed with the Sasakian
structure $(\varphi,\xi,\eta,\langle,\rangle)$ and with constant $\varphi$-sectional curvature $c$. The second fundamental form $\sigma$ of the surface is then defined by the equation of Gauss
$$
\nabla^N_XY=\nabla_XY+\sigma(X,Y),
$$
while the shape operator $A$ and the normal connection $\nabla^{\perp}$ are given by the equation of Weingarten
$$
\nabla^N_XV=-A_VX+\nabla^{\perp}_XV,
$$
for any vector fields $X$ and $Y$ tangent to the surface and any vector field $V$ normal to $\Sigma^2$, where $\nabla^N$ and $\nabla$ are the Levi-Civita connections on $N^{2n+1}(c)$ and $\Sigma^2$, respectively. The mean curvature vector field $\vec H$ of $\Sigma$ is given by $\vec H=(1/2)\trace\sigma$.

We also have the Gauss equation
\begin{align}\label{Gauss}
\langle R(X,Y)Z,W\rangle=&\langle R^N(X,Y)Z,W\rangle+\langle\sigma(Y,Z),\sigma(X,W)\rangle\\\nonumber&-\langle\sigma(X,Z),\sigma(Y,W)\rangle,
\end{align}
the Codazzi equation
\begin{equation}\label{Codazzi}
(R^N(X,Y)Z)^{\perp}=(\nabla^{\perp}_X\sigma)(Y,Z)-(\nabla^{\perp}_Y\sigma)(X,Z),
\end{equation}
and the equation of Ricci
\begin{equation}\label{Ricci}
\langle R^{\perp}(X,Y)U,V\rangle=\langle [A_U,A_V]X,Y\rangle+\langle R^N(X,Y)U,V\rangle,
\end{equation}
for any vector fields $X$, $Y$, $Z$ and $W$ tangent to $\Sigma^2$ and any normal vector fields $U$ and $V$, where $R^N$, $R$ and $R^{\perp}$ are the curvature tensors corresponding to $\nabla^N$, $\nabla$, and $\nabla^{\perp}$, respectively. 

\begin{definition} If the mean curvature vector $\vec H$ of the surface $\Sigma^2$ is
parallel in the normal bundle, i.e.,
$\nabla^{\perp}\vec H=0$, then $\Sigma^2$ is called a \textit{pmc surface}.
\end{definition}

The pmc surfaces in Euclidean sphere were studied by S.-T. Yau in \cite{Y}. Henceforth we shall assume that our ambient space $N^{2n+1}(c)$ has constant $\varphi$-sectional curvature $c\neq 1$.

Let us now consider a non-minimal pmc surface $\Sigma^2$ isometrically immersed in $N^{2n+1}(c)$. Since the map $p\in\Sigma^2\rightarrow(A_{\vec H}-|\vec H|^2\id)(p)$
is analytic, it follows that if $\vec H$ is an umbilical direction, then this either holds on the whole surface or only for a closed set without interior points. In the last case
$\vec H$ is not an umbilical direction in an open dense set. We shall split our study in two cases as $\vec H$ is umbilical everywhere or it is not umbilical on an open dense set.

First, we need the following lemma.

\begin{lemma}\label{lemma_com} For any vector $V$ normal to $\Sigma^2$, which is also orthogonal to
$\varphi T\Sigma^2$ and to $\varphi \vec H$, we have $[A_{\vec H},A_V]=0$, i.e., $A_{\vec H}$
commutes with $A_V$.
\end{lemma}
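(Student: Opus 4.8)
The plan is to read the commutator off the Ricci equation \eqref{Ricci} and to show that the ambient curvature term it produces is annihilated by the two orthogonality hypotheses on $V$. Taking $U=\vec H$ in \eqref{Ricci} gives
$$
\langle R^{\perp}(X,Y)\vec H,V\rangle=\langle[A_{\vec H},A_V]X,Y\rangle+\langle R^N(X,Y)\vec H,V\rangle
$$
for all $X,Y$ tangent to $\Sigma^2$. Since $\Sigma^2$ is a pmc surface we have $\nabla^{\perp}\vec H=0$, hence $R^{\perp}(X,Y)\vec H=0$. The whole statement therefore reduces to proving that $\langle R^N(X,Y)\vec H,V\rangle=0$ for every pair of tangent vectors $X,Y$: the commutator is then an endomorphism whose associated bilinear form vanishes on all of $T\Sigma^2$, and so $[A_{\vec H},A_V]=0$.

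Next I would substitute $W=\vec H$ into the curvature formula \eqref{eq:curv} and pair the result with $V$, using throughout that $\vec H$ and $V$ are normal while $X,Y$ are tangent. The leading $\frac{c+3}{4}$-term and the $\eta$-multiples of $\xi$ all carry a factor $\langle\vec H,X\rangle$, $\langle\vec H,Y\rangle$, $\langle X,V\rangle$ or $\langle Y,V\rangle$, each of which is zero by tangent-normal orthogonality, so these contributions disappear at once. After this bookkeeping the only surviving part is the $\varphi$-part of the $\frac{c-1}{4}$-term,
$$
\langle R^N(X,Y)\vec H,V\rangle=\frac{c-1}{4}\Big\{\langle\vec H,\varphi Y\rangle\langle\varphi X,V\rangle-\langle\vec H,\varphi X\rangle\langle\varphi Y,V\rangle+2\langle X,\varphi Y\rangle\langle\varphi\vec H,V\rangle\Big\}.
$$

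Finally I would invoke the defining properties of $V$. Since $X,Y$ are tangent to $\Sigma^2$, the vectors $\varphi X,\varphi Y$ lie in $\varphi T\Sigma^2$, so $\langle\varphi X,V\rangle=\langle\varphi Y,V\rangle=0$ by the assumption $V\perp\varphi T\Sigma^2$; and $\langle\varphi\vec H,V\rangle=0$ by the assumption $V\perp\varphi\vec H$. Thus the right-hand side vanishes identically, yielding $\langle R^N(X,Y)\vec H,V\rangle=0$ and hence $[A_{\vec H},A_V]=0$. There is no genuinely hard step here; the content of the lemma is precisely that the two orthogonality conditions on $V$ are exactly what is needed to kill the three $\varphi$-terms that the tangent-normal splitting alone cannot reach. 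The one point to watch is that $\eta(\vec H)$ need not vanish, but it appears only in combination with $\langle X,V\rangle$ or $\langle Y,V\rangle$, so it never obstructs the argument.
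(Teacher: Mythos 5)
Your proof is correct and follows exactly the paper's argument: both read $\langle[A_{\vec H},A_V]X,Y\rangle$ off the Ricci equation \eqref{Ricci}, use $\nabla^{\perp}\vec H=0$ to kill $R^{\perp}(X,Y)\vec H$, and reduce $\langle R^N(X,Y)\vec H,V\rangle$ via \eqref{eq:curv} to the three $\varphi$-terms, which vanish by $V\perp\varphi T\Sigma^2$ and $V\perp\varphi\vec H$ (your expression agrees with the paper's up to the skew-symmetry $\langle X,\varphi\vec H\rangle=-\langle\varphi X,\vec H\rangle$). The paper merely states this more tersely; your bookkeeping of the discarded $\frac{c+3}{4}$- and $\xi$-terms, and the remark that $\eta(\vec H)$ never obstructs, are accurate fillings-in of the same computation.
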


\begin{proof} The conclusion follows from the Ricci equation \eqref{Ricci},
since
\begin{align*}
\langle R^N(X,Y)\vec H,V\rangle &=\frac{c-1}{4}\{\langle X,\varphi \vec H\rangle\langle\varphi Y,V\rangle-\langle Y,\varphi\vec H\rangle\langle\varphi X,V\rangle+2\langle X, \varphi Y\rangle\langle\varphi\vec H,V\rangle\}\\&=0
\end{align*}
and $R^{\perp}(X,Y)\vec H=0$.
\end{proof}

\begin{corollary}\label{lemma_split} Either $\vec H$ is an
umbilical direction or there exists a basis that diagonalizes
simultaneously $A_{\vec H}$ and $A_V$, for all normal vectors $V$ satisfying $V\perp \varphi T\Sigma^2$
and $V\perp \varphi \vec H$.
\end{corollary}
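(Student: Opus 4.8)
The plan is to read the corollary as a pointwise linear-algebra statement and to deduce it directly from Lemma \ref{lemma_com}. Fix a point $p\in\Sigma^2$. If $\vec H$ is an umbilical direction at $p$, i.e.\ $A_{\vec H}=|\vec H|^2\id$ on $T_p\Sigma^2$, then the first alternative holds and there is nothing to prove. So I would assume that $A_{\vec H}$ is not a multiple of the identity at $p$ and aim for the second alternative.

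Next I would exploit that $T_p\Sigma^2$ is only $2$-dimensional and that $A_{\vec H}$ is self-adjoint. Since $\trace A_{\vec H}=\langle\trace\sigma,\vec H\rangle=2|\vec H|^2$ while $A_{\vec H}\neq|\vec H|^2\id$, the operator $A_{\vec H}$ must have two distinct real eigenvalues $\lambda_1\neq\lambda_2$ with $\lambda_1+\lambda_2=2|\vec H|^2$. Self-adjointness then yields an orthonormal basis $\{e_1,e_2\}$ of $T_p\Sigma^2$ consisting of eigenvectors of $A_{\vec H}$, and the two associated eigenspaces are $1$-dimensional.

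The key step is then to feed in Lemma \ref{lemma_com}: for every normal vector $V$ with $V\perp\varphi T\Sigma^2$ and $V\perp\varphi\vec H$ one has $[A_{\vec H},A_V]=0$. A self-adjoint operator that commutes with $A_{\vec H}$ must leave each eigenspace of $A_{\vec H}$ invariant; since these eigenspaces are $1$-dimensional, $e_1$ and $e_2$ are automatically eigenvectors of $A_V$ as well. Hence the \emph{single} basis $\{e_1,e_2\}$ diagonalizes $A_{\vec H}$ and every such $A_V$ simultaneously, which is exactly the second alternative.

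I do not expect a genuine obstacle here: the content is the standard fact that two commuting self-adjoint operators, one of which has simple spectrum, are simultaneously diagonalized by the eigenbasis of the operator with simple spectrum. The only points requiring a little care are, first, that the diagonalizing basis must be the \emph{same} for all admissible $V$ (this is precisely what simplicity of the spectrum of $A_{\vec H}$ guarantees, an eigenspace decomposition into lines being rigid), and second, that the dichotomy is globally meaningful: by the analyticity of $p\mapsto(A_{\vec H}-|\vec H|^2\id)(p)$ already noted before the lemma, if $\vec H$ fails to be an umbilical direction then it is non-umbilical on an open dense subset of $\Sigma^2$, on which the construction above produces a frame simultaneously diagonalizing $A_{\vec H}$ and all such $A_V$.
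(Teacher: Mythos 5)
Your proposal is correct and is exactly the argument the paper intends: the corollary is stated without proof as an immediate consequence of Lemma \ref{lemma_com}, and your reasoning---that $[A_{\vec H},A_V]=0$ together with the simple spectrum of the self-adjoint operator $A_{\vec H}$ on the $2$-dimensional tangent space forces its eigenbasis to diagonalize every admissible $A_V$ simultaneously---is the standard fact being invoked. Your closing remark on analyticity also matches the paper's own setup preceding the lemma, so there is nothing to add.
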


\subsection{$\vec H$ is an umbilical direction} In this case the surface is pseudo-umbilical, i.e., $A_{\vec H}=|H|^2\id$, and since $\vec H$ is also parallel, we have
\begin{align*}
R^N(X,Y)\vec H&=\nabla^N_X\nabla^N_Y\vec H-\nabla^N_Y\nabla^N_X\vec H-\nabla^N_{[X,Y]}\vec H=-|\vec H|^2(\nabla^N_XY-\nabla^N_YX-[X,Y])\\&=0,
\end{align*}
for any tangent vector fields $X$ and $Y$.

We shall prove that $\Sigma^2$ is an integral surface and $n\geq 3$. First, we have

\begin{lemma}\label{l1umb} If $\Sigma^2$ is a non-minimal pmc pseudo-umbilical surface in a space form $N^{2n+1}(c)$, with $c\neq 1$, then the following are equivalent:
\begin{enumerate}
\item[({\it i})] $\Sigma^2$ is an integral surface $;$

\item[({\it ii})] $\Sigma^2$ is anti-invariant$;$

\item[({\it iii})] $\vec H\perp\xi$$;$

\item[({\it iv})] $\varphi\vec H\perp T\Sigma^2$.

\end{enumerate}
\end{lemma}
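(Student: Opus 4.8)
The plan is to prove the four-fold equivalence by routing everything through the integrability condition (i), using three pointwise identities that encode the interaction of the Sasakian structure with $\vec H$. Throughout write $h=\eta(\vec H)=\langle\vec H,\xi\rangle$, let $\xi^{\top}$ and $(\varphi\vec H)^{\top}$ be the components of $\xi$ and $\varphi\vec H$ tangent to $\Sigma^2$, fix a local orthonormal frame $\{e_1,e_2\}$, set $\alpha=\langle e_1,\varphi e_2\rangle$, and let $J$ be the rotation by $\pi/2$ on $T\Sigma^2$. In this notation the four conditions read: (i) $\xi^{\top}=0$; (ii) $\alpha=0$; (iii) $h=0$; (iv) $(\varphi\vec H)^{\top}=0$.

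The engine is the relation $R^N(X,Y)\vec H=0$ just recorded for pseudo-umbilical pmc surfaces. Substituting it into the curvature expression \eqref{eq:curv}, and using $\vec H\perp T\Sigma^2$, the skew-symmetry of $\varphi$, $\varphi\xi=0$, and $c\neq1$, I obtain for all tangent $X,Y$
\[
\eta(\vec H)\big(\eta(X)Y-\eta(Y)X\big)+\langle\varphi\vec H,X\rangle\varphi Y-\langle\varphi\vec H,Y\rangle\varphi X+2\langle X,\varphi Y\rangle\varphi\vec H=0 .
\]
Taking $X=e_1,Y=e_2$ and projecting this identity onto $T\Sigma^2$, and then onto the non-tangent directions $\varphi e_1,\varphi e_2$, I extract the two algebraic relations
\[
h\,\xi^{\top}=3\alpha\,J(\varphi\vec H)^{\top}\qquad\text{and}\qquad \langle\xi^{\top},(\varphi\vec H)^{\top}\rangle=0 .
\]
Independently, from $\nabla^N_X\vec H=-A_{\vec H}X+\nabla^{\perp}_X\vec H=-|\vec H|^2X$ (pseudo-umbilicity together with $\nabla^{\perp}\vec H=0$) and $\nabla^N_X\xi=-\varphi X$, a one-line computation of $X\langle\vec H,\xi\rangle$ gives the differential relation
\[
\grad h=(\varphi\vec H)^{\top}-|\vec H|^2\xi^{\top}.
\]

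With these in hand the equivalences follow. First (i)$\Rightarrow$(iii) is immediate and uses none of the relations: if $\eta$ vanishes on $T\Sigma^2$ then $2h=\sum_i\langle\sigma(e_i,e_i),\xi\rangle=-\sum_i\langle e_i,\nabla^N_{e_i}\xi\rangle=\sum_i\langle e_i,\varphi e_i\rangle=0$ by skew-symmetry. The converse (iii)$\Rightarrow$(i) is the crux: if $h\equiv0$ then $\grad h=0$, so the differential relation forces $(\varphi\vec H)^{\top}=|\vec H|^2\xi^{\top}$, and feeding this into $\langle\xi^{\top},(\varphi\vec H)^{\top}\rangle=0$ yields $|\vec H|^2|\xi^{\top}|^2=0$; since $\vec H\neq0$ we get $\xi^{\top}=0$. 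The implications (ii)$\Rightarrow$(i) and (iv)$\Rightarrow$(i) run on the same principle: each hypothesis makes the first algebraic relation collapse to $h\,\xi^{\top}=0$, so on the open set $\{\xi^{\top}\neq0\}$ one has $h\equiv0$, hence $\grad h=0$ there, and the two relations again force $\xi^{\top}=0$—a contradiction unless that open set is empty, i.e.\ (i) holds. Finally (i)$\Rightarrow$(ii) is the general fact that integral submanifolds are anti-invariant, and (i)$\Rightarrow$(iv) follows by combining (i)$\Rightarrow$(iii) with the differential relation. Chaining these through (i) gives the four-fold equivalence.

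I expect the main obstacle to be the honest derivation of the second algebraic relation $\langle\xi^{\top},(\varphi\vec H)^{\top}\rangle=0$: it is not visible in the tangential projection alone and only emerges after also projecting the curvature identity onto the non-tangent vectors $\varphi e_1,\varphi e_2$ and combining the resulting scalar equations, where the bookkeeping with $\varphi e_i$ (which is neither tangent nor normal to $\Sigma^2$) is delicate. The conceptual heart, by contrast, is the differential relation $\grad h=(\varphi\vec H)^{\top}-|\vec H|^2\xi^{\top}$: it is what propagates an everywhere-or-nowhere dichotomy across the surface, and non-minimality ($|\vec H|\neq0$) is used decisively to pass from $|\vec H|^2|\xi^{\top}|^2=0$ to $\xi^{\top}=0$.
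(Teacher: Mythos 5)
Your proposal is correct and is essentially the paper's own argument in invariant dress: your two algebraic relations $h\,\xi^{\top}=3\alpha\,J(\varphi\vec H)^{\top}$ and $\langle\xi^{\top},(\varphi\vec H)^{\top}\rangle=0$ are precisely the paper's equations \eqref{eq:R2} and \eqref{eq:Rsuppl} written frame-independently (in the paper's adapted frame with $\eta(E_2)=0$ one has $\xi^{\top}\parallel E_1$, and \eqref{eq:Rsuppl} says exactly that $(\varphi\vec H)^{\top}\perp\xi^{\top}$), while your differential relation $\grad h=(\varphi\vec H)^{\top}-|\vec H|^2\xi^{\top}$ is the paper's computation of $E_1(\eta(\vec H))$, with all implications routed through (i) just as in the paper. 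The one step you flagged as delicate does go through — writing $u_i=\eta(e_i)$, the combination $u_1\times(\varphi e_2\textnormal{-projection})-u_2\times(\varphi e_1\textnormal{-projection})$ of the curvature identity gives $\langle\xi^{\top},(\varphi\vec H)^{\top}\rangle=0$ unconditionally — but note that if one first substitutes your tangential relation into those projections one only obtains $(1-9\alpha^2)\langle\xi^{\top},(\varphi\vec H)^{\top}\rangle=0$, so the raw equations must be combined directly.
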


\begin{proof} First, assume that $\Sigma^2$ is integral. Then, from the definition of the Sasakian structure, it follows that $\langle X,\varphi Y\rangle=d\eta(X,Y)=0$, for any tangent vector fields $X$ and $Y$, which means that our surface is anti-invariant. Moreover, since $\eta(X)=0$ for any tangent vector field $X$, we also have 
$$
0=Y(\eta(X))=\langle\nabla^N_YX,\xi\rangle+\langle X,\nabla^N_Y\xi\rangle=\langle\sigma(X,Y),\xi\rangle-\langle X,\varphi Y\rangle=\langle\sigma(X,Y),\xi\rangle,
$$
which implies that $\eta(\vec H)=0$.

Finally, from $R^N(X,Y)\vec H=0$ and the fact that $c\neq 1$, we obtain that 
\begin{equation}\label{eq_gen}
\eta(\vec H)\eta(X)Y-\eta(\vec H)\eta(Y)X-\langle\varphi\vec H,Y\rangle\varphi X+\langle\varphi\vec H,X\rangle\varphi Y+2\langle\varphi Y,X\rangle\varphi\vec H=0, 
\end{equation}
and then, taking the inner product with $\varphi Y$ and since $\Sigma^2$ is integral, that $\langle\varphi\vec H,X\rangle=0$, i.e., $\varphi\vec H$ is orthogonal to $T\Sigma^2$. 

We have just proved that $(i)$ implies $(ii)$, $(iii)$ and $(iv)$.

Next, we will show that $(ii)$ implies $(i)$. Let us choose an orthonormal frame field $\{E_1,E_2\}$ on $\Sigma^2$ such that $E_2\perp\xi$, i.e., $\eta(E_2)=0$. Then, from \eqref{eq_gen}, we get
\begin{equation}\label{ast}
\eta(\vec H)\eta(E_1)E_2-\langle\varphi\vec H,E_2\rangle\varphi E_1+\langle\varphi\vec H,E_1\rangle\varphi E_2+2\langle\varphi E_2,E_1\rangle\varphi\vec H=0.
\end{equation}
Taking the inner product with $E_2$ and with $\varphi E_2$, respectively, one obtains
\begin{equation}\label{eq:R2}
\eta(\vec H)\eta(E_1)+3\langle\varphi E_2,E_1\rangle\langle\varphi\vec H,E_2\rangle=0
\end{equation}
and
\begin{equation}\label{eq:Rsuppl}
\langle\varphi\vec H,E_1\rangle=0.
\end{equation}

Now, if $\Sigma^2$ is anti-invariant, from \eqref{eq:R2}, it follows that $\eta(\vec H)\eta(E_1)=0$. But, since $\Sigma^2$ is pseudo-umbilical, we also have
$$
E_1(\eta(\vec H))=\langle\nabla^N_{E_1}\vec H,\xi\rangle+\langle\vec H,\nabla^N_X\xi\rangle=-|\vec H|^2\eta(E_1)-\langle\vec H,\varphi E_1\rangle=-|\vec H|^2\eta(E_1),
$$
which shows that $\eta(E_1)=0$. This means that $\Sigma^2$ is an integral surface.

If $(iii)$ holds on $\Sigma^2$, then we have $0=E_1(\eta(\vec H))=-|\vec H|^2\eta(E_1)$,
which again implies that $\Sigma^2$ is integral.

Finally, assume that $\varphi\vec H\perp T\Sigma^2$. Then, from \eqref{eq:R2}, we obtain that $\eta(\vec H)\eta(E_1)=0$, which, as we have seen before, means that $\Sigma^2$ is an integral surface.
\end{proof}

\begin{proposition}\label{p_umb} Let $\Sigma^2$ be a non-minimal pmc surface in a Sasakian space form $N^{2n+1}(c)$, with $c\neq 1$. If the mean curvature vector field $\vec H$ is an umbilical direction everywhere, then $n\geq 3$ and $\Sigma^2$ is an integral surface.
\end{proposition}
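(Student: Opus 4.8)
The plan is to prove that $\Sigma^2$ satisfies one of the equivalent conditions of Lemma \ref{l1umb}---concretely, that $\eta(\vec H)\equiv 0$---and then to deduce $n\ge 3$ from a dimension count. Since $\Sigma^2$ is pseudo-umbilical and $\vec H$ is parallel, we already have $\nabla^N_X\vec H=-|\vec H|^2 X$ for every tangent $X$, hence $R^N(X,Y)\vec H=0$ and the pointwise identity \eqref{eq_gen}. Fixing an orthonormal frame $\{E_1,E_2\}$ with $E_1$ along the tangential part $\xi^\top$ of $\xi$ (so that $\eta(E_2)=0$), I would extract from \eqref{eq_gen} the scalar relations \eqref{eq:Rsuppl} and \eqref{eq:R2}, together with the normal-component identity obtained by substituting $\varphi E_1=\langle\varphi E_1,E_2\rangle E_2+(\varphi E_1)^\perp$ and $\varphi\vec H=\langle\varphi\vec H,E_2\rangle E_2+(\varphi\vec H)^\perp$ into \eqref{eq_gen}. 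Writing $h=\eta(\vec H)$, $f=\eta(E_1)$, $\omega=\langle\varphi E_1,E_2\rangle$ and $k=\langle\varphi\vec H,E_2\rangle$, these relations read $hf=3\omega k$ and $(\varphi\vec H)^\perp=-\tfrac{k}{2\omega}(\varphi E_1)^\perp$ on the open set where $\Sigma^2$ fails to be anti-invariant, i.e. where $\omega\ne 0$. A short case analysis shows $k\ne0$ there, and taking norms and inner products (using $|\varphi V|^2=|V|^2-\eta(V)^2$) then yields two further pointwise identities, $|\vec H|^2=h^2+3k^2$ and $9\omega^2+f^2=1$, which together express $f^2$, $k^2$ and $\omega^2$ as explicit functions of $h$.

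The heart of the argument is analytic. Using $\nabla^N_X\vec H=-|\vec H|^2X$, the Sasakian identity $(\nabla^N_X\varphi)\vec H=\langle X,\vec H\rangle\xi-\eta(\vec H)X$ and $\nabla^N_X\xi=-\varphi X$, I would compute
\[
\grad h=(\varphi\vec H)^\top-|\vec H|^2\,\xi^\top,\qquad \Delta h=-2\bigl(1+|\vec H|^2\bigr)h,
\]
the second formula coming from $\Div\xi^\top=\trace A_{\xi^\perp}=2h$ and $\Div(\varphi\vec H)^\top=\trace A_{(\varphi\vec H)^\perp}-2h=-2h$. On the locus $\{\omega\ne0\}$ the pointwise relations above turn the first formula into an eikonal-type equation $|\grad h|^2=F(h)$, with $F$ an explicit rational function of $h$ whose only real zeros are $h=\pm|\vec H|$, while the second is the Poisson equation $\Delta h=-2(1+|\vec H|^2)h$. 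I would then feed these into the Bochner formula $\tfrac12\Delta|\grad h|^2=|\Hess h|^2+\langle\grad h,\grad\Delta h\rangle+K|\grad h|^2$ to express the Gaussian curvature $K$ as a function of $h$ alone, and compare this with the value of $K$ furnished by the Gauss equation \eqref{Gauss}; the resulting contradiction would force $\{\omega\ne0\}$ to be empty. Hence $\Sigma^2$ is anti-invariant, and Lemma \ref{l1umb} gives that it is integral.

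I expect this comparison step to be the main obstacle: the Gauss-equation value of $K$ involves the $\vec H$-orthogonal part $\tau$ of the second fundamental form, so closing the contradiction requires controlling $\tau$ (e.g. through the Codazzi equation \eqref{Codazzi}) rather than the purely first-order data $h,f,\omega,k$. A cleaner alternative, which I would try first, is to show directly that $h$ must be constant on $\{\omega\ne0\}$---forcing $\grad h=0$, hence $F(h)=0$ and $h=\pm|\vec H|$, hence $\varphi\vec H=0$ and condition $(iv)$ of Lemma \ref{l1umb}, which is incompatible with $\omega\ne0$.

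Finally, the dimension statement is short. Once $\Sigma^2$ is integral, the bound on the dimension of integral submanifolds gives $2=\dimension\Sigma^2\le n$, so $n\ge 2$; and if $n=2$ the normal bundle of the integral surface $\Sigma^2\subset N^5(c)$ is exactly $\varphi(T\Sigma^2)\oplus\mathbb{R}\xi$, which is $3$-dimensional. By Lemma \ref{l1umb} we have $\vec H\perp\xi$ and $\varphi\vec H\perp T\Sigma^2$, so $\vec H\in\varphi(T\Sigma^2)$ and $\vec H=\varphi W$ for some tangent $W$; then $\varphi\vec H=\varphi^2 W=-W$ is simultaneously tangent and normal, forcing $W=0$ and $\vec H=0$, contrary to non-minimality. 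Therefore $n\ge 3$.
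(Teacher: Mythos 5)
Your first-order analysis is correct as far as it goes: with $h=\eta(\vec H)$, $f=\eta(E_1)$, $\omega=\langle\varphi E_1,E_2\rangle$, $k=\langle\varphi\vec H,E_2\rangle$, your identities $hf=3\omega k$, $|\vec H|^2=h^2+3k^2$ and $1-f^2=9\omega^2$ recover exactly the paper's \eqref{eq:R2}, \eqref{eq:R1.1} and \eqref{eq:R1.2}; your formulas $\grad h=(\varphi\vec H)^{\top}-|\vec H|^2\xi^{\top}$ and $\Delta h=-2(1+|\vec H|^2)h$ check out; and your closing dimension count (if $n=2$ then $\vec H=\varphi W$ with $W$ tangent, so $\varphi\vec H=-W$ is simultaneously tangent and normal, forcing $\vec H=0$) is a correct, even slightly cleaner, alternative to the paper's observation that $\{\varphi E_1,\varphi E_2,\vec H,\varphi\vec H,\xi\}$ are independent normal fields. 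But the heart of the proposition --- the contradiction on $\{\omega\neq 0\}$ --- is never actually derived, and you concede as much. The Bochner comparison cannot close as stated: Bochner does express $K$ as an explicit function of $h$ on $\{\grad h\neq 0\}$, but pseudo-umbilicity fixes only the $\vec H$-component of $\sigma$, so the Gauss-equation \eqref{Gauss} value of $K$ contains the undetermined quantity $\langle\sigma(E_1,E_1),\sigma(E_2,E_2)\rangle-|\sigma(E_1,E_2)|^2$; rather than producing a contradiction, the comparison merely determines this free part of $\sigma$. Your fallback (``show directly that $h$ is constant'') is asserted without any mechanism; note that constancy would instantly contradict your own pair $\Delta h=-2(1+|\vec H|^2)h$ and $|\grad h|^2=F(h)$ (forcing $h=0$ while $F(0)>0$), so proving it is exactly as hard as the proposition itself.

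What the missing step requires --- and what the paper supplies --- is second-fundamental-form information coming from the ambient space, not from the surface. Differentiating the algebraic relations along $E_1$ (which you could also do) yields the components $\langle\varphi\vec H,\sigma(E_1,E_2)\rangle$, $\langle\varphi\vec H,\sigma(E_i,E_i)\rangle$ and $\eta(\sigma(E_1,E_2))$, i.e.\ equations \eqref{eq:12}, \eqref{eq:8} and \eqref{eq:nonumber}; but the decisive scalar $\eta(\sigma(E_2,E_2))$ is pinned down only by invoking the local $\varphi$-symmetry of $N^{2n+1}(c)$, namely identity \eqref{symmetric} applied to $(\nabla^N_{E_2}R^N)(E_1,E_2)\vec H$, which gives \eqref{eq:R_par}. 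With that input, differentiating \eqref{eq:R2} along $E_1$ produces $|\vec H|^2+(5-18\omega^2)h^2=0$, impossible since $\omega^2<1/9$ by \eqref{eq:R1.2} --- a purely pointwise argument needing no Bochner formula. To repair your proof you would have to import \eqref{symmetric} (or equivalent Codazzi-type control of $\eta(\sigma(E_2,E_2))$); the eikonal--Poisson system for $h$ alone does not force the contradiction.
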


\begin{proof} Let us again consider the orthonormal frame field $\{E_1,E_2\}$ on $\Sigma^2$ used in the proof of Lemma \ref{l1umb}. We shall first prove some equations that will be used later on.

Taking the inner product of \eqref{ast} with $\varphi \vec H$ and $\varphi E_1$, respectively, and using \eqref{eq:Rsuppl}, one obtains
\begin{equation}\label{eq:R1}
\eta(E_1)\eta(\vec H)\langle\varphi \vec H,E_2\rangle+\langle\varphi E_2,E_1\rangle\langle\varphi \vec H,\varphi \vec H\rangle=0,
\end{equation}
and
\begin{equation}\label{eq:R3}
3\eta(E_1)\eta(\vec H)\langle\varphi E_2,E_1\rangle+\langle\varphi \vec H,E_2\rangle\langle\varphi E_1,\varphi E_1\rangle=0.
\end{equation}

We shall now assume that $\langle\varphi E_2,E_1\rangle\neq 0$ and we shall prove that this leads to a contradiction. From Lemma \eqref{l1umb} and equation \eqref{eq:Rsuppl} it follows that $\langle\varphi \vec H,E_2\rangle\neq 0$. Then, from \eqref{eq:R2} and \eqref{eq:R1} we have
\begin{equation}\label{eq:R1.1}
\langle\varphi \vec H,\varphi \vec H\rangle=|\vec H|^2-(\eta(\vec H))^2=3\langle\varphi \vec H,E_2\rangle^2
\end{equation}
and, from \eqref{eq:R2} and \eqref{eq:R3}, one obtains
\begin{equation}\label{eq:R1.2}
\langle\varphi E_1,\varphi E_1\rangle=1-(\eta(E_1))^2=9\langle\varphi E_2,E_1\rangle^2.
\end{equation}

Now, from equation \eqref{eq:R1.1}, it follows that $E_1(|\varphi\vec H|^2)=3E_1(\langle\varphi\vec H,E_2\rangle^2)$, i.e.,
$$
\langle\varphi \vec H, \varphi\nabla^N_{E_1}\vec H-\eta(H)E_1\rangle=3\langle\varphi \vec H,E_2\rangle(\langle\varphi\nabla^N_{E_1}\vec H-\eta(H)E_1,E_2\rangle+\langle\varphi \vec H,\nabla^N_{E_1}E_2\rangle).
$$
Using \eqref{eq:R2} and \eqref{eq:Rsuppl}, since $\nabla^N_{E_1}\vec H=-|\vec H|^2E_1$ and 
$$
\langle\varphi \vec H,\nabla_{E_1}E_2\rangle=\langle\nabla_{E_1}E_2,E_1\rangle\langle\varphi \vec H,E_1\rangle=0,
$$
we get
\begin{equation}\label{eq:12}
\langle\varphi \vec H,\sigma(E_1,E_2)\rangle=-2|\vec H|^2\langle\varphi E_2,E_1\rangle.
\end{equation}

In the same way, from equation \eqref{eq:R1.1}, we obtain 
\begin{equation}\label{eq:8}
\langle\varphi \vec H,\sigma(E_1,E_1)\rangle=-\langle\varphi \vec H,\sigma(E_2,E_2)\rangle=-\frac{2}{3}\eta(\vec H).
\end{equation}
From \eqref{eq:Rsuppl}, it follows that $\langle\varphi \vec H,\nabla^N_{E_1}E_1\rangle=\eta(\vec H)$, which implies, using \eqref{eq:8}, that
\begin{equation}\label{eq:9}
\langle\varphi \vec H,\nabla_{E_1}E_1\rangle=\frac{5}{3}\eta(\vec H).
\end{equation}

Next, from \eqref{eq:R2} and \eqref{eq:9}, one obtains that $\eta(\nabla_{E_1}E_2)=5\langle\varphi E_2,E_1\rangle$. Then, since $\eta(E_2)=0$, we have $0=\langle\nabla^N_{E_1}E_2,\xi\rangle+\langle E_2,\nabla^N_{E_1}\xi\rangle=\langle\nabla^N_{E_1}E_2,\xi\rangle-\langle E_2,\varphi E_1\rangle$, which gives
\begin{equation}\label{eq:nonumber}
\eta(\sigma(E_1,E_2))=-6\langle\varphi E_2,E_1\rangle.
\end{equation}

Finally, taking the inner product of \eqref{ast} with $\sigma(E_1,E_2)$ and using \eqref{eq:12}, we get
\begin{equation}\label{eq:nn}
\langle\varphi\vec H,E_2\rangle\langle\varphi E_1,\sigma(E_1,E_2)\rangle=-4|H|^2\langle\varphi E_2,E_1\rangle^2.
\end{equation}

As we have seen in Section \ref{sintro}, the Sasakian space form $N^{2n+1}(c)$ is a $\varphi$-symmetric space, which means that
equation \eqref{symmetric} holds on $N^{2n+1}(c)$. This implies that
\begin{align*}
(\nabla^N_{E_2}R^N)(E_1,E_2)\vec H=&-\mathcal{T}_{E_2}R^N(E_1,E_2)\vec H+R^N(\mathcal{T}_{E_2}E_1,E_2)\vec H+R^N(E_1,\mathcal{T}_{E_2}E_2)\vec H\\\nonumber &+R^N(E_1,E_2)\mathcal{T}_{E_2}\vec H.
\end{align*}

After a long but straightforward computation, using equations \eqref{eq:curv}, \eqref{ast} and \eqref{eq:12}--\eqref{eq:nn}, and the fact that $R^N(E_1,E_2)\vec H=0$, the last equation becomes
\begin{align*}
\eta(\vec H)\eta(\sigma(E_2,E_2))E_1-(2\eta(E_1)\langle\varphi\vec H,E_2\rangle-9\eta(\vec H)\langle\varphi E_2,E_1\rangle)E_2&\\-\frac{1}{3}\eta(\vec H)\varphi E_1+5|\vec H|^2\langle\varphi E_2,E_1\rangle\varphi E_2-2(\eta(E_1)+\langle\varphi E_2,\sigma(E_1,E_2)\rangle&\\-\langle\sigma(E_2,E_2),\varphi E_1\rangle)\varphi\vec H+\langle\varphi\vec H,E_2\rangle\varphi\sigma(E_1,E_2)-\eta(E_1)\eta(\vec H)\sigma(E_2,E_2)&=0
\end{align*}
and, by taking the inner product with $E_1$ and using \eqref{eq:Rsuppl} and \eqref{eq:nn}, we obtain that
\begin{equation}\label{eq:R_par}
\eta(\vec H)\eta(\sigma(E_2,E_2))+9|\vec H|^2\langle\varphi E_2,E_1\rangle^2=0.
\end{equation}

Next, from equations \eqref{eq:R2}, \eqref{eq:R1.1} and \eqref{eq:R1.2}, it follows that
$$
3|\vec H|^2\langle\varphi e_2,e_1\rangle^2=(1-6\langle\varphi e_2,e_1\rangle^2)(\eta(\vec H))^2.
$$
Then, from \eqref{eq:R_par}, since $\eta(\vec H)\neq 0$ by Lemma \ref{l1umb}, we get that 
$$
\eta(\sigma(E_2,E_2))=-3\eta(\vec H)(1-6\langle\varphi E_2,E_1\rangle^2),
$$ 
or, equivalently, 
$$
\eta(\sigma(E_1,E_1))=\eta(\vec H)(5-18\langle\varphi e_2,e_1\rangle^2).
$$
It is easy to see that $\eta(E_2)=0$ implies $\eta(\nabla_{E_1}E_1)=0$, which means that 
\begin{equation}\label{eq:eta}
\eta(\nabla^N_{E_1}E_1)=\eta(\vec H)(5-18\langle\varphi E_2,E_1\rangle^2).
\end{equation}

Next, from \eqref{eq:R1.2}, we see that $9\langle\varphi E_2,E_1\rangle E_1(\langle\varphi E_2,E_1\rangle)+\eta(E_1)\eta(\nabla^N_{E_1}E_1)=0$. Then, from \eqref{eq:eta} and \eqref{eq:R2}, it follows
\begin{equation}\label{eq:e2}
E_1(\langle\varphi E_2,E_1\rangle)=\Big(\frac{5}{3}-6\langle\varphi E_2,E_1\rangle^2\Big)\langle\varphi \vec H,E_2\rangle.
\end{equation}

From \eqref{eq:R2} we get $E_1(\eta(\vec H)\eta(E_1)+3\langle\varphi E_2,E_1\rangle\langle\varphi\vec H,E_2\rangle)=0$,
that, since $\vec H$ is umbilical and parallel, and using in this order equations \eqref{eq:eta}, \eqref{eq:12}, \eqref{eq:e2}, \eqref{eq:R1.1} and \eqref{eq:R1.2}, leads to
$$
|\vec H|^2+(5-18\langle\varphi e_2,e_1\rangle^2)(\eta(\vec H))^2=0.
$$
Since $\Sigma^2$ is non-minimal and equation \eqref{eq:R1.2} implies that $\langle\varphi e_2,e_1\rangle^2<1/9$, we can see that the last equation is actually a contradiction. Therefore, $\langle\varphi E_2,E_1\rangle=0$, i.e., $\Sigma^2$ is anti-invariant and then integral, by Lemma \ref{l1umb}. 

Again using Lemma \ref{l1umb}, it can be easily seen that $\{\varphi E_1,\varphi E_2, H,\varphi H,\xi\}$ are linearly independent vector fields in the normal bundle of the surface, which means that $n\geq 3$.
\end{proof}

\subsection{$\vec H$ is not an umbilical direction} As we have seen, in this case $\vec H$ is not umbilical on an open dense set. We shall work on this set and then we shall extend our results to the whole surface by continuity. It will turn out that the codimension of such a surface $\Sigma^2$ in $N^{2n+1}(c)$ can be reduced to $9$ and then that the surface lies in an $11$-dimensional Sasakian space form $N^{11}(c)$.

\begin{proposition}\label{lemma_parallel} If $\vec H$ is not an umbilical direction then there exists a parallel subbundle of the normal bundle that
contains the image of the second fundamental form $\sigma$ and has dimension less than or equal to $9$.
\end{proposition}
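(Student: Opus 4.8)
The plan is to construct the reducing subbundle explicitly out of the second fundamental form together with the contact data $\varphi$ and $\xi$, and then to prove it is $\nabla^{\perp}$-parallel by a bootstrapping argument in which the Codazzi equation transfers control from the off-diagonal part of $\sigma$ to its diagonal part. First I would work on the open dense set where $\vec H$ is non-umbilical, so that $A_{\vec H}$ has two distinct eigenvalues and, by Corollary \ref{lemma_split}, there is a locally smooth orthonormal frame $\{E_1,E_2\}$ diagonalizing $A_{\vec H}$ and, simultaneously, every $A_V$ with $V\perp\varphi T\Sigma^2$ and $V\perp\varphi\vec H$. Since $\langle\sigma(E_1,E_2),V\rangle=\langle A_VE_1,E_2\rangle=0$ for all such $V$, the off-diagonal term satisfies
$$
\sigma(E_1,E_2)\in\Span\{(\varphi E_1)^{\perp},(\varphi E_2)^{\perp},(\varphi\vec H)^{\perp}\},
$$
where $(\cdot)^{\perp}$ denotes the normal component. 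Writing $F_p=\Span\{\sigma(X,Y):X,Y\in T_p\Sigma^2\}$ for the first normal space, I would then set
$$
\widehat L_p=\Span\{T_p\Sigma^2,\ \xi,\ \varphi(T_p\Sigma^2),\ F_p,\ \varphi(F_p)\},\qquad L_p=\widehat L_p\cap N_p\Sigma^2 .
$$
Because $\varphi^2U=-U+\eta(U)\xi$ and $\varphi\xi=0$, the ambient space $\widehat L$ is $\varphi$-invariant; it contains $T\Sigma^2$ and is spanned by the eleven vectors $E_1,E_2,\xi,\varphi E_1,\varphi E_2,\vec H,C,B,\varphi\vec H,\varphi C,\varphi B$, where $C=\sigma(E_1,E_1)-\sigma(E_2,E_2)$ and $B=\sigma(E_1,E_2)$. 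Hence $\dim L=\dim\widehat L-2\le 9$, and clearly $F\subseteq L$.

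Next I would prove that $L$ is parallel, i.e. $\nabla^{\perp}_X\Gamma(L)\subseteq\Gamma(L)$, by checking a spanning set in a careful order so as to avoid circularity. Using the Sasakian identities $(\nabla^N_U\varphi)V=\langle U,V\rangle\xi-\eta(V)U$ and $\nabla^N_U\xi=-\varphi U$ together with the Gauss and Weingarten formulas, the derivatives $\nabla^{\perp}_X\xi^{\perp}$, $\nabla^{\perp}_X(\varphi E_i)^{\perp}$ and $\nabla^{\perp}_X(\varphi\vec H)^{\perp}$ all land in $L$ unconditionally; for the last one this uses $\nabla^{\perp}\vec H=0$, so that only $\varphi A_{\vec H}X\in\varphi T\Sigma^2$ and terms of $F$ appear. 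Since $B=\sigma(E_1,E_2)$ is a combination of $(\varphi E_i)^{\perp}$ and $(\varphi\vec H)^{\perp}$, it follows that $\nabla^{\perp}_XB\in L$ as well. The diagonal terms are then controlled through the Codazzi equation \eqref{Codazzi}: from \eqref{eq:curv} one computes that $(R^N(X,Y)Z)^{\perp}$ is a combination of $\xi^{\perp}$ and the $(\varphi E_i)^{\perp}$, hence lies in $L$; feeding this together with $\nabla^{\perp}_XB\in L$ into \eqref{Codazzi} for the index choices $(X,Y,Z)=(E_1,E_2,E_1)$ and $(E_2,E_1,E_2)$ yields $\nabla^{\perp}_{E_i}\sigma(E_j,E_j)\in L$, and with $\nabla^{\perp}\vec H=0$ this gives $\nabla^{\perp}_X F\subseteq L$. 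Finally, the $\varphi$-invariance of $\widehat L$ upgrades this to $\nabla^{\perp}_X(\varphi V)^{\perp}\in L$ for every $V\in F$, completing the verification that $L$ is parallel.

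The main obstacle is exactly this bootstrapping step for the diagonal part of $\sigma$: unlike the off-diagonal term, $\sigma(E_i,E_i)$ is not a priori tied to the $\varphi$-image directions, and it is only the combination of simultaneous diagonalizability (Corollary \ref{lemma_split}), which forces $\sigma(E_1,E_2)$ into the $\varphi$-span, with the Codazzi equation that closes the system. I would also have to address constant rank: letting $\ell=\max\dim L$ over the non-umbilical set, $L$ is a smooth parallel subbundle of rank $\ell\le 9$ on the open set where this maximum is attained, and extending it by normal parallel transport and invoking continuity — the non-umbilical set being open and dense — produces a parallel subbundle on all of $\Sigma^2$ that still contains the image of $\sigma$, which is the assertion of the proposition.
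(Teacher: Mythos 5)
Your proposal is correct and takes essentially the same route as the paper: you build the identical subbundle $L$ (spanned by $\xi^{\perp}$, $(\varphi T\Sigma^2)^{\perp}$, $\im\sigma$ and $(\varphi(\im\sigma))^{\perp}$, hence of rank at most $9$) and establish parallelism from the same three ingredients — Corollary \ref{lemma_split} to control $\sigma(E_1,E_2)$, the Codazzi equation \eqref{Codazzi} with \eqref{eq:curv} for the diagonal terms, and $\nabla^{\perp}\vec H=0$ together with the Sasakian identities and $\varphi$-invariance for the rest. The only difference is cosmetic: you differentiate the spanning sections of $L$ directly, while the paper verifies the equivalent dual statement that $\nabla^{\perp}_XV$ stays orthogonal to $L$ for every normal field $V\perp L$.
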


\begin{proof} Let $L$ be a subbundle of the normal bundle, defined by
$$
L=\Span\{\im\sigma\cup(\varphi(\im\sigma))^{\perp}\cup(\varphi(T\Sigma^2))^{\perp}\cup\xi^{\perp}\},
$$
where
$(\varphi(T\Sigma^2))^{\perp}=\{(\varphi X)^{\perp}:X\ \textnormal{tangent to}\
\Sigma^2\}$, $(\varphi(\im\sigma))^{\perp}=\{(\varphi\sigma(X,Y))^{\perp}:X,Y\ \textnormal{tangent to}\
\Sigma^2\}$ and $\xi^{\perp}$ is the normal component of $\xi$ along the surface. We shall prove that $L$ is parallel.

If $V$ is a normal vector field orthogonal to $L$, it is easy to verify that so it is $\varphi V$.

Next, we shall prove that for a normal vector field $V$ orthogonal to $L$ also $\nabla^{\perp} V$ is orthogonal to $L$, which means that $L$ is parallel.

First, since $V$ is orthogonal to $L$, we have
$$
\langle\nabla_X^{\perp} V,\xi^{\perp}\rangle=\langle\nabla^N_X V,\xi^{\perp}\rangle=-\langle V,\nabla^N_X(\xi-\xi^{\top})\rangle=\langle V,\varphi X\rangle+\langle V,\sigma(X,\xi^{\top})\rangle=0,
$$
for any tangent vector field $X$, where $\xi^{\top}$ is the tangent part of $\xi$.

Next, one obtains
\begin{align*}
\langle\nabla_X^{\perp} V,(\varphi Y)^{\perp}\rangle&=\langle\nabla^N_XV,(\varphi Y)^{\perp}\rangle=-\langle V,\nabla^N_X(\varphi Y-(\varphi Y)^{\top})\rangle\\&=-\langle V,\varphi\nabla_X^NY+\langle X,Y\rangle\xi-\eta(Y)X\rangle+\langle V,\sigma(X,(\varphi Y)^{\top})\rangle\\&=\langle\varphi V,\sigma(X,Y)\rangle\\&=0,
\end{align*}
for any tangent vector fields $X$ and $Y$.

We also get
\begin{align*}
\langle\nabla_X^{\perp} V,\varphi\vec H\rangle&=\langle\nabla^N_XV+A_VX,\varphi\vec H\rangle=-\langle V,\varphi\nabla^N_X\vec H-\eta(H)X\rangle+\langle V,\sigma(X,(\varphi\vec H)^{\top})\rangle\\&=\langle V,\varphi A_{\vec H}X\rangle\\&=0.
\end{align*}

We now choose a local orthonormal frame field $\{E_1,E_2\}$ tangent to the surface such that it diagonalizes $A_{\vec H}$. Since $\vec H$ is not umbilical, from Corollary \ref{lemma_split}, we get that $\{E_1,E_2\}$ also diagonalizes $A_V$, where $V$ is a normal vector field satisfying $V\perp\varphi T\Sigma^2$ and $V\perp\varphi\vec H$. Since $\nabla_X^{\perp}V$ has these properties, it follows that 
\begin{equation}\label{eq:diag}
\langle\nabla^{\perp}_XV,\sigma(E_1,E_2)\rangle=0,
\end{equation}
for any normal vector field $V$ orthogonal to $L$. 

Now, using the fact that $V$ is normal and orthogonal to $L$, the Codazzi equation \eqref{Codazzi} and expression \eqref{eq:curv} of the curvature $R^N$, one obtains
\begin{align*}
\langle\nabla_{E_i}^{\perp}V,\sigma(E_j,E_k)\rangle&=-\langle V,\nabla^{\perp}_{E_i}\sigma(E_j,E_k)\rangle\\&=-\langle V,(\nabla^{\perp}_{E_i}\sigma)(E_j,E_k)+\sigma(\nabla_{E_i}E_j,E_k)+\sigma(E_j,\nabla_{E_i}E_k)\rangle\\&=-\langle V,(\nabla^{\perp}_{E_i}\sigma)(E_j,E_k)\rangle\\&=-\langle V,(\nabla^{\perp}_{E_j}\sigma)(E_i,E_k)+R^N(E_i,E_j)E_k\rangle=-\langle V,\nabla^{\perp}_{E_j}\sigma(E_i,E_k)\rangle\\&=\langle \nabla^{\perp}_{E_j}V,\sigma(E_i,E_k)\rangle,
\end{align*}
that, since $\sigma$ is symmetric, together with \eqref{eq:diag}, shows that $\langle\nabla^{\perp}_{E_i}V,\sigma(E_j,E_k)\rangle=0$ when $i\neq j$ or $i\neq k$ or $j\neq k$.

Next, for $i\neq j$, we have
$$
\langle\nabla^{\perp}_{E_i}V,\sigma(E_i,E_i)\rangle=2\langle\nabla^{\perp}_{E_i}V,\vec H\rangle-\langle\nabla^{\perp}_{E_i}V,\sigma(E_j,E_j)\rangle=-2\langle V,\nabla^{\perp}_{E_i}\vec H\rangle=0,
$$
since $\vec H$ is parallel.

Finally, it only remains to be proved that $\nabla^{\perp}V$ is orthogonal to $(\varphi(\im\sigma))^{\perp}$. This follows from the following computation
\begin{align*}
\langle\nabla^{\perp}_XV,(\varphi\sigma(Y,Z))^{\perp}\rangle&=\langle\nabla^N_XV,(\varphi\sigma(Y,Z))^{\perp}\rangle=-\langle V,\nabla^N_X(\varphi\sigma(Y,Z))^{\perp}\rangle\\&=-\langle V,\nabla^N_X(\varphi\sigma(Y,Z)-(\varphi\sigma(Y,Z)^{\top}))\rangle\\&=-\langle V,\varphi\nabla^N_X\sigma(Y,Z)\rangle+\langle V,\sigma(X,(\varphi\sigma(Y,Z))^{\top})\rangle\\&=\langle\varphi V,\nabla^N_X\sigma(Y,Z)\rangle=-\langle\nabla^N_X\varphi V,\sigma(Y,Z)\rangle\\&=-\langle\nabla^{\perp}_X\varphi V,\sigma(Y,Z)\rangle\\&=0,
\end{align*}
since $\varphi V\perp L$ and, therefore, $\nabla^{\perp}\varphi V\perp\im\sigma$.

Hence, we have just proved that the subbundle $L$ is parallel.
\end{proof}

\begin{proposition}\label{Oku} Let $L$ be the normal subbundle considered in Proposition \ref{lemma_parallel}. Then $T\Sigma^2\oplus L$ is parallel with respect to Okumura's connection $\bar\nabla$ and invariant by $\bar T$ and $\bar R$, i.e., $\bar T(X,Y)\in T\Sigma^2\oplus L$ and $\bar R(X,Y)Z\in T\Sigma^2\oplus L$, for any $X,Y,Z\in T\Sigma^2\oplus L$.
\end{proposition}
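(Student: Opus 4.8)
Set $E=T\Sigma^2\oplus L$, viewed as a subbundle of $TN^{2n+1}(c)$ along $\Sigma^2$. My plan is to reduce all three assertions to two structural facts about $E$ and then read each conclusion off the explicit formulas for $\mathcal{T}$, $R^N$ and $\bar R$ recorded in Section \ref{sintro}. The two facts are: \emph{(a)} $\xi$ is a section of $E$ and $E$ is $\varphi$-invariant, $\varphi(E)\subseteq E$; and \emph{(b)} $E$ is parallel for the Levi-Civita connection along the surface, $\nabla^N_XZ\in E$ for every tangent $X$ and every section $Z$ of $E$. I expect the only genuinely substantial step to be the $\varphi$-invariance in \emph{(a)}: this is exactly where the special generators $(\varphi(T\Sigma^2))^{\perp}$, $(\varphi(\im\sigma))^{\perp}$ and $\xi^{\perp}$ of $L$ are forced upon us, since without them $E$ would fail to be closed under $\varphi$ and the whole argument would collapse.

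To establish \emph{(a)} I would first note $\xi=\xi^{\top}+\xi^{\perp}\in T\Sigma^2\oplus L=E$. For the invariance it is enough to apply $\varphi$ to each family of generators of $E$ and use $\varphi^2U=-U+\eta(U)\xi$ together with $\varphi\xi=0$. For a tangent $X$ one writes $\varphi X=(\varphi X)^{\top}+(\varphi X)^{\perp}$ and observes that the normal part lies in $(\varphi(T\Sigma^2))^{\perp}\subseteq L$, so $\varphi X\in E$; the same splitting applied to $\sigma(Y,Z)$ disposes of $\varphi(\im\sigma)$. For a generator $W=(\varphi X)^{\perp}=\varphi X-(\varphi X)^{\top}$ one computes $\varphi W=-X+\eta(X)\xi-\varphi((\varphi X)^{\top})$, which lies in $E$ because $X,\xi\in E$ and $(\varphi X)^{\top}$ is tangent; the generators $(\varphi(\im\sigma))^{\perp}$ and $\xi^{\perp}$ are treated in the same way. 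This yields $\varphi(E)\subseteq E$.

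Fact \emph{(b)} should follow directly from the equations of Gauss and Weingarten together with Proposition \ref{lemma_parallel}. Splitting $Z=Z^{\top}+Z^{L}$ with $Z^{\top}$ tangent and $Z^{L}$ a section of $L$, the Gauss formula gives $\nabla^N_XZ^{\top}=\nabla_XZ^{\top}+\sigma(X,Z^{\top})$, which lies in $T\Sigma^2\oplus\im\sigma\subseteq E$, while Weingarten gives $\nabla^N_XZ^{L}=-A_{Z^{L}}X+\nabla^{\perp}_XZ^{L}$, whose tangential term is in $T\Sigma^2$ and whose normal term stays in $L$ precisely because $L$ is parallel in the normal bundle.

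With \emph{(a)} and \emph{(b)} available, the three conclusions are a matter of substitution. Since $\bar T(X,Y)=2\mathcal{T}_XY=2(\langle X,\varphi Y\rangle\xi-\eta(X)\varphi Y+\eta(Y)\varphi X)$ is a combination of $\xi$, $\varphi X$ and $\varphi Y$, it lies in $E$ for $X,Y\in E$; and $\bar\nabla_XZ=\nabla^N_XZ+\mathcal{T}_XZ$ has both summands in $E$, so $T\Sigma^2\oplus L$ is $\bar\nabla$-parallel. Finally, for $X,Y,Z\in E$ the tensor $R^N(X,Y)Z$ is, by \eqref{eq:curv}, a linear combination of $X$, $Y$, $\xi$, $\varphi X$, $\varphi Y$ and $\varphi Z$, all of which lie in $E$, and the remaining terms in the displayed expression for $\bar R(X,Y)Z$ are multiples of the same vectors; hence $\bar R(X,Y)Z\in E$. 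This will complete the proof.
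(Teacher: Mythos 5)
Your proof is correct and follows essentially the same route as the paper: both arguments rest on $\xi\in T\Sigma^2\oplus L$ and $\varphi(T\Sigma^2\oplus L)\subseteq T\Sigma^2\oplus L$ for the $\bar T$- and $\bar R$-invariance, and both establish $\bar\nabla$-parallelism by splitting a section into its tangent part and its $L$-part, applying the Gauss and Weingarten formulas, and invoking the parallelism of $L$ from Proposition \ref{lemma_parallel}. The only difference is one of exposition: you spell out the $\varphi$-invariance generator by generator (using $\varphi^2U=-U+\eta(U)\xi$ and $\varphi\xi=0$), a verification the paper dismisses as ``easy to see,'' so your write-up is if anything more complete.
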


\begin{proof} Since $\varphi(T\Sigma^2\oplus L)\subset T\Sigma^2\oplus L$ and $\xi\in T\Sigma^2\oplus L$, it is easy to see that $T\Sigma^2\oplus L$ is invariant by $\bar T$ and $\bar R$. It is also easy to see that $\bar\nabla_XY\in T\Sigma^2\oplus L$, for any vector fields $X$ and $Y$ tangent to $\Sigma^2$.

Next, let us consider a normal vector field $V\in L$. Then 
$$
\bar\nabla_XV=\nabla^N_XV+\mathcal{T}_XV=-A_VX+\nabla^{\perp}_XV+\langle X,\varphi V\rangle\xi-\eta(X)\varphi V+\eta(V)\varphi X,
$$
which means that $\bar\nabla_XV\in T\Sigma^2\oplus L$ if and only if $\nabla_X^{\perp}V\in L$. Since from Proposition \eqref{lemma_parallel} we know that $L$ is parallel, it follows that $T\Sigma^2\oplus L$ is parallel with respect to Okumura's connection $\bar\nabla$.
\end{proof}

It is easy to see that, if $\gamma:I\rightarrow N^{2n+1}(c)$ is a parametrized curve, then $\bar\nabla_{\gamma'}\gamma'=\nabla^N_{\gamma'}\gamma'$, which means that the connections $\bar\nabla$ and $\nabla^N$ have the same geodesics, and therefore, that $\bar\nabla$ is a complete connection. Then, using that $\bar\nabla\bar T=0$, $\bar\nabla\bar R=0$ and Proposition \ref{Oku}, we can apply \cite[Theorem 2]{ET} to prove that our surface lies in an $11$-dimensional totally geodesic submanifold of $N^{2n+1}(c)$. But, since $\varphi(T\Sigma^2\oplus L)\subset L\oplus T\Sigma^2$ and $\xi\in L\oplus T\Sigma^2$, this totally geodesic submanifold actually is a Sasakian space form with the same $\varphi$-sectional curvature $c$ as the ambient space (see \cite{YK}). We conclude with the following proposition. 

\begin{proposition}\label{p_numb} Let $\Sigma^2$ be an isometrically immersed non-minimal pmc surface in a Sasakian space form
$N^{2n+1}(c)$. If its mean curvature vector field is not an umbilical direction, then the surface lies in an $11$-dimensional Sasakian space form $N^{11}(c)$.
\end{proposition}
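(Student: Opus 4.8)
The plan is to assemble the three ingredients already in place — the parallel normal subbundle of Proposition \ref{lemma_parallel}, the invariance properties recorded in Proposition \ref{Oku}, and an affine reduction-of-codimension theorem — and then to verify that the totally geodesic submanifold so produced is again a Sasakian space form. Set $W=T\Sigma^2\oplus L$, where $L$ is the subbundle built in Proposition \ref{lemma_parallel}. Because $L$ is orthogonal to $T\Sigma^2$ and $\dim L\leq 9$, the sum is direct and $\dim W\leq 11$; moreover $\im\sigma\subset L$, so $\sigma$ takes its values in $W$. By Proposition \ref{Oku}, $W$ is parallel for Okumura's connection $\bar\nabla$ and invariant under the torsion $\bar T$ and the curvature $\bar R$, while $\varphi W\subset W$ and $\xi\in W$.

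Next I would invoke the affine-geometric features of $\bar\nabla$. Since $\bar\nabla_{\gamma'}\gamma'=\nabla^N_{\gamma'}\gamma'$ for every curve $\gamma$, the connections $\bar\nabla$ and $\nabla^N$ have the same geodesics, so $\bar\nabla$ is complete; and since $N^{2n+1}(c)$ is locally $\varphi$-symmetric one has $\bar\nabla\bar T=0$ and $\bar\nabla\bar R=0$. With $W$ a $\bar\nabla$-parallel distribution along $\Sigma^2$ that is invariant under $\bar T$ and $\bar R$ and contains $T\Sigma^2$, these are precisely the hypotheses of the reduction theorem \cite[Theorem 2]{ET}. Applying it yields a submanifold $\widetilde N\subset N^{2n+1}(c)$, totally geodesic for $\bar\nabla$ and hence for $\nabla^N$ (the two connections sharing geodesics), with $\Sigma^2\subset\widetilde N$ and $\dim\widetilde N=\dim W\leq 11$.

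It then remains to identify $\widetilde N$ as a Sasakian space form. Since $\varphi W\subset W$ and $\xi\in W$, the structure tensors $(\varphi,\xi,\eta,\langle,\rangle)$ restrict to $\widetilde N$, and by \cite{YK} a $\varphi$-invariant, $\xi$-containing totally geodesic submanifold of a Sasakian space form is itself a Sasakian space form with the same $\varphi$-sectional curvature $c$. A $\varphi$-invariant subspace containing $\xi$ has odd dimension, so $\dim\widetilde N\in\{3,5,7,9,11\}$; if it is strictly less than $11$ I would enlarge $W$ to an $11$-dimensional $\varphi$-invariant subspace still containing $\xi$ and pass to the corresponding larger totally geodesic leaf, which is again Sasakian of curvature $c$. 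Hence $\Sigma^2$ lies in some $N^{11}(c)$, as claimed.

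The main obstacle is ensuring that \cite[Theorem 2]{ET} applies verbatim to Okumura's connection: one must check that completeness of $\bar\nabla$ together with $\bar\nabla\bar T=0$ and $\bar\nabla\bar R=0$ genuinely suffices for the invariant parallel distribution $W$ to integrate to an auto-parallel submanifold, and that the leaf inherits the full Sasakian structure and not merely the induced metric. Once these two points are granted, the dimension bookkeeping and the citation of \cite{YK} finish the argument with no further computation.
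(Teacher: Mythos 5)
Your proof is correct and takes essentially the same route as the paper: it applies \cite[Theorem 2]{ET} to $W=T\Sigma^2\oplus L$ with Okumura's connection $\bar\nabla$ (complete, since it shares geodesics with $\nabla^N$, and with $\bar\nabla\bar T=0$, $\bar\nabla\bar R=0$), using Proposition \ref{Oku}, and then identifies the resulting totally geodesic submanifold as a Sasakian space form of the same $\varphi$-sectional curvature $c$ via \cite{YK}. Your added bookkeeping on odd dimensions and enlarging $W$ to an exactly $11$-dimensional $\varphi$-invariant subspace containing $\xi$ is a detail the paper passes over silently, but it does not change the argument.
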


Now, from Propositions \ref{p_umb} and \ref{p_numb}, we obtain our main result.

\begin{theorem}\label{thm_red} Let $\Sigma^2$ be an isometrically immersed non-minimal pmc surface in a Sasakian space form $N^{2n+1}(c)$ with constant $\varphi$-sectional curvature $c\neq 1$. Then one of the following holds$:$
\begin{enumerate}
\item $\Sigma^2$ is an integral pseudo-umbilical surface and $n\geq 3$$;$ or

\item $\Sigma^2$ is not pseudo-umbilical and lies in a Sasakian space form $N^{11}(c)$.
\end{enumerate}
\end{theorem}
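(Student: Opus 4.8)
The plan is to reduce the theorem to a clean dichotomy governed by the single question of whether $\vec H$ is an umbilical direction, and then to quote the two propositions that have already carried out the substantive work.

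First I would invoke the analyticity observation recorded at the beginning of this section: the map $p\in\Sigma^2\mapsto(A_{\vec H}-|\vec H|^2\id)(p)$ is real-analytic, so the locus on which $\vec H$ is an umbilical direction is either all of $\Sigma^2$ or a closed subset with empty interior. This produces exactly two mutually exclusive and exhaustive regimes, and I note that $\vec H$ being an umbilical direction at a point is precisely the pseudo-umbilical condition $A_{\vec H}=|\vec H|^2\id$ there, since taking the trace of $A_{\vec H}=\lambda\id$ forces $\lambda=|\vec H|^2$.

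In the first regime, $\vec H$ is an umbilical direction everywhere, so $\Sigma^2$ is pseudo-umbilical, and Proposition \ref{p_umb} immediately yields that $\Sigma^2$ is an integral surface with $n\geq 3$; this is alternative (1). In the second regime the umbilical locus has empty interior, so $\vec H$ fails to be an umbilical direction on an open dense subset and in particular $\Sigma^2$ is not pseudo-umbilical; here Proposition \ref{p_numb} applies and gives that $\Sigma^2$ lies in an $11$-dimensional Sasakian space form $N^{11}(c)$, which is alternative (2). Since the two regimes are exhaustive, one of the two alternatives always holds, and the proof is complete.

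I do not anticipate any genuine obstacle at this stage, because all of the difficulty has been isolated in the two propositions. The pseudo-umbilical case rests on the $\varphi$-symmetry identity \eqref{symmetric} together with the contradiction argument that forces $\langle\varphi E_2,E_1\rangle=0$, while the non-umbilical case rests on constructing the parallel normal subbundle $L$ and applying the reduction-of-codimension result \cite[Theorem 2]{ET}, combined with the fact that a $\varphi$-invariant totally geodesic submanifold containing $\xi$ is again a Sasakian space form of the same $\varphi$-sectional curvature. The theorem itself is then merely the assembly of these two results along the analytic dichotomy.
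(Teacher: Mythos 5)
Your proposal is correct and follows essentially the same route as the paper: the authors likewise establish the dichotomy via the analyticity of $p\mapsto(A_{\vec H}-|\vec H|^2\id)(p)$ at the start of Section \ref{s3} and then obtain the theorem directly by combining Propositions \ref{p_umb} and \ref{p_numb}. Your added remark that the trace of $A_{\vec H}=\lambda\id$ forces $\lambda=|\vec H|^2$ is a harmless (and correct) clarification the paper leaves implicit.
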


\section{Anti-invariant pmc surfaces}\label{s4}

\subsection{Holomorphic differentials} Let $\Sigma^2$ be an isometrically immersed surface in a Sasakian space form $N^{2n+1}(c)$. Let us consider the following two quadratic forms on $\Sigma^2$:
$$
Q_1(X,Y)=8\langle\sigma(X,Y),\vec H\rangle-(c-1)\eta(X)\eta(Y)
$$
and
$$
Q_2(X,Y)=\langle\varphi X,\vec H\rangle\langle\varphi Y,\vec H\rangle+\eta(X)\eta(Y)-\eta(X)\langle\varphi Y,\vec H\rangle-\eta(Y)\langle\varphi X,\vec H\rangle,
$$
where $\sigma$ is the second fundamental form of the surface and $\vec H$ its mean curvature vector field.

\begin{proposition} If $\Sigma^2$ is an anti-invariant pmc surface in a Sasakian space form $N^{2n+1}(c)$, then the $(2,0)$-parts $Q_1^{(2,0)}$ and $Q_2^{(2,0)}$ of $Q_1$ and $Q_2$, respectively, are holomorphic.
\end{proposition}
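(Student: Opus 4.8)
The plan is to reduce the holomorphicity of each $Q_i^{(2,0)}$ to a single tensorial identity and then verify that identity from the structure equations. The surface $\Sigma^2$ carries the conformal structure induced by its metric; in a local isothermal coordinate $z$ one has $\nabla_{\partial_{\bar z}}\partial_z=0$, so for any symmetric $2$-form $Q$ on $\Sigma^2$,
\[
\partial_{\bar z}\big(Q(\partial_z,\partial_z)\big)=(\nabla_{\partial_{\bar z}}Q)(\partial_z,\partial_z).
\]
Hence $Q^{(2,0)}$ is holomorphic if and only if $(\nabla_{\partial_{\bar z}}Q)(\partial_z,\partial_z)=0$. Expanding this in a local orthonormal frame $\{E_1,E_2\}$ and splitting into real and imaginary parts, the condition becomes the frame-independent identity $2\Div Q=d(\trace Q)$; equivalently, the trace-free part of $Q$ is a divergence-free (Codazzi) tensor. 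It therefore suffices to compute $\trace Q_i$ and $\Div Q_i$ and to check $2\Div Q_i=d(\trace Q_i)$, for $i=1,2$.

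For $Q_1$, first note $\trace Q_1=16|\vec H|^2-(c-1)|\xi^{\top}|^2$, where $\xi^{\top}$ is the tangential part of $\xi$ and $|\vec H|$ is constant because $\Sigma^2$ is pmc. The two tensors entering $\nabla Q_1$ are handled as follows. For the Hopf-type part $8\langle\sigma(\cdot,\cdot),\vec H\rangle$ I use $\nabla^{\perp}\vec H=0$ to move the derivative onto $\sigma$, and then the Codazzi equation \eqref{Codazzi} to rewrite $\sum_i(\nabla^{\perp}_{E_i}\sigma)(E_i,Y)$ as the curvature term $\sum_i\big(R^N(E_i,Y)E_i\big)^{\perp}$ (the trace term drops out since $\vec H$ is parallel). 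Feeding \eqref{eq:curv} into this and using anti-invariance, i.e. $\langle\varphi X,Y\rangle=0$ for $X,Y$ tangent, all the $\varphi$-terms of \eqref{eq:curv} vanish and one is left with a multiple of $(c-1)\eta(\vec H)\eta(Y)$. For the correction term $-(c-1)\,\eta\otimes\eta$ I use the intrinsic identity $(\nabla_Z\eta)(X)=\eta(\sigma(Z,X))$, valid on an anti-invariant surface, which also yields $Y(|\xi^{\top}|^2)=2\eta(\sigma(Y,\xi^{\top}))$. Collecting terms, both $2\Div Q_1$ and $d(\trace Q_1)$ reduce to $-2(c-1)\eta(\sigma(\xi^{\top},Y))$, so the identity holds. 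The coefficient $8$ and the precise correction $-(c-1)\eta\otimes\eta$ are exactly what is needed to make the curvature contribution cancel against the derivative of the correction term.

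For $Q_2$ the key observation is the factorization $Q_2=\omega\otimes\omega$, where $\omega$ is the $1$-form $\omega(X)=\langle\varphi X,\vec H\rangle-\eta(X)$; consequently $Q_2^{(2,0)}=(\omega^{(1,0)})^2$, and it is enough to prove that $\omega^{(1,0)}$ is a holomorphic $1$-form, i.e. that $\omega$ is harmonic. Differentiating $\omega$ with the Sasakian identities $(\nabla^N_Z\varphi)X=\langle Z,X\rangle\xi-\eta(X)Z$ and $\nabla^N_Z\xi=-\varphi Z$, together with $\nabla^{\perp}\vec H=0$ and anti-invariance, I expect
\[
(\nabla_Z\omega)(X)=\langle Z,X\rangle\,\eta(\vec H)+\langle\varphi\sigma(Z,X),\vec H\rangle-\eta(\sigma(Z,X)).
\]
This is manifestly symmetric in $Z$ and $X$, so $d\omega=0$. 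Taking the trace and using $\trace\sigma=2\vec H$ together with the skew-symmetry $\langle\varphi\vec H,\vec H\rangle=0$ gives $\Div\omega^{\sharp}=2\langle\varphi\vec H,\vec H\rangle=0$, so $\omega$ is co-closed. Hence $\omega$ is harmonic and $Q_2^{(2,0)}$ is holomorphic.

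The main obstacle is the bookkeeping in the $Q_1$ computation: one must check that anti-invariance really annihilates every $\varphi$-term of \eqref{eq:curv} in $\langle R^N(E_i,Y)E_i,\vec H\rangle$, and that the surviving curvature term matches, with the correct constant, the contribution of $-(c-1)\eta\otimes\eta$ coming from $(\nabla_Z\eta)(X)=\eta(\sigma(Z,X))$. Once the reduction to $2\Div Q=d(\trace Q)$ is in place, $Q_2$ is comparatively immediate thanks to the factorization $Q_2=\omega\otimes\omega$.
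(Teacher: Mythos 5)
Your proposal is correct, and it is organized differently from the paper's proof, though it runs on the same geometric fuel. The paper verifies holomorphicity directly in isothermal coordinates: it computes $\widehat Z(Q_1(Z,Z))$ using the Codazzi equation \eqref{Codazzi}, the curvature expression \eqref{eq:curv} and anti-invariance, exploiting the simplification $\sigma(Z,\widehat Z)=\langle Z,\widehat Z\rangle\vec H$; for $Q_2$ it shows $\widehat Z(\eta(Z))=\widehat Z(\langle\varphi Z,\vec H\rangle)=\langle Z,\widehat Z\rangle\eta(\vec H)$, which is exactly your harmonicity of $\omega$ written in coordinates. You instead invoke the standard equivalence (in the style of Alencar--do Carmo--Tribuzy) that $Q^{(2,0)}$ is holomorphic iff $2\Div Q=d(\trace Q)$, i.e., iff the trace-free part of $Q$ is divergence-free, and verify this tensorially. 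I checked your key identities and they hold: $(\nabla_Z\eta)(X)=\eta(\sigma(Z,X))$ is valid on an anti-invariant surface because the tangential part of $\nabla^N_Z\xi=-\varphi Z$ vanishes, so $\nabla_Z\xi^{\top}=A_{\xi^{\perp}}Z$; the curvature trace is $\sum_i\langle R^N(E_i,Y)E_i,\vec H\rangle=\frac{c-1}{4}\eta(Y)\eta(\vec H)$, since the $\varphi$-terms of \eqref{eq:curv} die by anti-invariance and the tangential terms pair to zero against $\vec H$; hence $\Div Q_1(Y)=2(c-1)\eta(\vec H)\eta(Y)-(c-1)\bigl(2\eta(\vec H)\eta(Y)+\eta(\sigma(\xi^{\top},Y))\bigr)=-(c-1)\eta(\sigma(\xi^{\top},Y))=\tfrac12\,d(\trace Q_1)(Y)$, as you claim. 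The formula you flagged as ``expected,'' $(\nabla_Z\omega)(X)=\langle Z,X\rangle\eta(\vec H)+\langle\varphi\sigma(Z,X),\vec H\rangle-\eta(\sigma(Z,X))$, is indeed correct: the $\omega(\nabla_ZX)$ terms cancel and $\langle\varphi X,A_{\vec H}Z\rangle=0$ because $\varphi X$ is normal; it is symmetric, and its trace is $2\eta(\vec H)+2\langle\varphi\vec H,\vec H\rangle-2\eta(\vec H)=0$, so $\omega$ is harmonic. What your route buys: it is coordinate-free, it makes structurally visible why the coefficients $8$ and $-(c-1)$ in $Q_1$ are forced (the curvature contribution must cancel the term coming from differentiating $\eta\otimes\eta$), and for $Q_2$ it isolates the sharper statement that $\omega^{(1,0)}$ itself is a holomorphic $1$-form, of which the paper proves only the squared consequence. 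The paper's direct computation is shorter in practice, because in the complexified frame the mixed second fundamental form collapses at once to $\langle Z,\widehat Z\rangle\vec H$ and no divergence bookkeeping is needed.
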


\begin{proof} Let us consider isothermal coordinates $(u,v)$ on the surface and then we have $ds^2=\lambda^2(du^2+dv^2)$. Next, define
$z=u+iv$, $\widehat z=u-iv$, $dz=(1/\sqrt{2})(du+idv)$, $d\widehat
z=(1/\sqrt{2})(du-idv)$ and
$$
Z=\frac{1}{\sqrt{2}}\Big(\frac{\partial}{\partial
u}-i\frac{\partial}{\partial v}\Big),\quad \widehat Z=\frac{1}{\sqrt{2}}\Big(\frac{\partial}{\partial
u}+i\frac{\partial}{\partial v}\Big).
$$
It follows that $\langle Z,\widehat Z\rangle=\langle\frac{\partial}{\partial
u},\frac{\partial}{\partial u}\rangle=\langle\frac{\partial}{\partial
v},\frac{\partial}{\partial v}\rangle=\lambda^2$. We mention that this rather unusual notation for conjugation is used only for the reader's convenience.

In the following, we shall compute
$$
\widehat Z(Q_1(Z,Z))=\widehat Z(8\langle\sigma(Z,Z),\vec H\rangle-(c-1)(\eta(Z))^2).
$$

First, we have
\begin{align*}
\widehat Z(\langle\sigma(Z,Z),\vec H\rangle)&=\langle\nabla^N_{\widehat Z}\sigma(Z,Z),\vec H\rangle+\langle\sigma(Z,Z),\nabla^N_{\widehat Z}\vec H\rangle\\
&=\langle\nabla^{\perp}_{\widehat Z}\sigma(Z,Z),\vec H\rangle+\langle\sigma(Z,Z),\nabla^{\perp}_{\widehat Z}\vec H\rangle\\&=\langle(\nabla^{\perp}_{\widehat Z}\sigma)(Z,Z),\vec H\rangle,
\end{align*}
since $\vec H$ is parallel and
$$
(\nabla^{\perp}_{\widehat Z}\sigma)(Z,Z)=\nabla^{\perp}_{\widehat Z}\sigma(Z,Z)-2\sigma(\nabla_{\widehat Z}Z,Z)=\nabla^{\perp}_{\widehat Z}\sigma(Z,Z),
$$
where $\nabla_{\widehat Z}Z=0$, from the definition of $\nabla$.

Next, from the Codazzi equation \eqref{Codazzi}, one obtains
\begin{align}\label{eq:1}
\widehat Z(\langle
\sigma(Z,Z),\vec H\rangle)=&\langle(\nabla^{\perp}_{Z}\sigma)(\widehat Z,Z),\vec H\rangle+\langle (R^N(\widehat Z,Z)Z)^{\perp},\vec H\rangle\\\nonumber &+\langle\sigma(Z,Z),\nabla^{\perp}_{\widehat Z}\vec H\rangle\\\nonumber=&\langle(\nabla^{\perp}_{Z}\sigma)(\widehat Z,Z),\vec H\rangle+\langle R^N(\widehat Z,Z)Z,\vec H\rangle.
\end{align}
We use the expression \eqref{eq:curv} of the curvature $R^N$ and the fact that our surface is anti-invariant, to obtain
\begin{equation}\label{eq:2}
\langle R^N(\widehat Z,Z)Z,\vec H\rangle=\frac{c-1}{4}\langle Z,\widehat Z\rangle\eta(Z)\eta(\vec H).
\end{equation}
Next, it is easy to see that 
\begin{equation}\label{eq:sigma}
\nabla^N_{\widehat Z}Z=\sigma(\widehat Z,Z)=\langle\widehat Z,Z\rangle \vec H
\end{equation}
and that $\nabla_ZZ=\frac{1}{\lambda^2}\langle\nabla_ZZ,\widehat Z\rangle Z$, and then we have
\begin{align}\label{eq:3}
\langle(\nabla^{\perp}_{Z}\sigma)(\widehat Z,Z),\vec H\rangle=&\langle\nabla^N_Z\sigma(\widehat Z,Z),\vec H\rangle-\langle\sigma(\nabla_Z\widehat Z, Z), \vec H\rangle-\langle\sigma(\widehat Z,\nabla_ZZ), \vec H\rangle\\\nonumber
=&\langle\nabla^N_Z(\langle\widehat Z,Z\rangle \vec H),\vec H\rangle-\frac{1}{\lambda^2}\langle\nabla_ZZ,\widehat Z\rangle \langle\sigma(\widehat Z,Z), \vec H\rangle
\\\nonumber=&\langle\nabla^N_Z(\langle\widehat Z,Z\rangle \vec H),\vec H\rangle-\langle\nabla_ZZ,\widehat Z\rangle|\vec H|^2\\\nonumber=&\langle\nabla_Z\widehat Z,Z\rangle|\vec H|^2+\langle\nabla_ZZ,\widehat Z\rangle|\vec H|^2+\langle\widehat Z,Z\rangle\langle\nabla^{\perp}_Z\vec H,\vec H\rangle\\\nonumber &-\langle\nabla_ZZ,\widehat Z\rangle|\vec H|^2\\\nonumber=&0.
\end{align}

Since the ambient space is a Sasakian space form, we use that $\Sigma^2$ is anti-invariant and equation \eqref{eq:sigma} to prove that
\begin{align}\label{eq:term2}
\widehat Z((\eta(Z))^2)&=2\eta(Z)(\langle\nabla^N_{\widehat Z}Z,\xi\rangle+\langle Z,\nabla^N_{\widehat Z}\xi)=2\eta(Z)(\langle Z,\widehat Z\rangle\eta(\vec H)-\langle Z,\varphi\widehat Z\rangle)\\\nonumber &=2\langle Z,\widehat Z\rangle\eta(Z)\eta(\vec H).
\end{align}

From equations \eqref{eq:1}, \eqref{eq:2}, \eqref{eq:3} and \eqref{eq:term2} we conclude that $\widehat Z(Q_1(Z,Z))=0$, i.e., the $(2,0)$-part of $Q_1$ is holomorphic.

In order to prove that also the $(2,0)$-part of $Q_2$ is holomorphic we shall compute $\widehat Z(Q_2(Z,Z))=\widehat Z((\eta(Z)-\langle\varphi Z,\vec H\rangle)^2)$.

First, as we have seen before, we have $\widehat Z(\eta(Z))=\langle Z,\widehat Z\rangle\eta(\vec H)$. 

Next, using the properties of a Sasakian space form, the fact that our surface is anti-invariant and pmc, and equation \eqref{eq:sigma}, we get
\begin{align*}
\widehat Z(\langle\varphi Z,\vec H\rangle)&=\langle\nabla^N_{\widehat Z}\varphi Z,\vec H\rangle+\langle\varphi Z,\nabla^N_{\widehat Z}H\rangle\\&=\langle\varphi\nabla^N_{\widehat Z}Z-\langle Z,\widehat Z\rangle\xi+\eta(Z)\widehat Z,\vec H\rangle-\langle\varphi Z,A_H\widehat Z\rangle\\&=\langle\langle Z,\widehat Z\rangle\varphi\vec H-\langle Z,\widehat Z\rangle\xi,\vec H\rangle\\&=\langle Z,\widehat Z\rangle\eta(\vec H).
\end{align*}

Hence $\widehat Z(Q_2(Z,Z))=0$ and we conclude.
\end{proof}

\subsection{Hopf cylinders} Let us now consider the orbit space $\bar N=N/\xi$ of the Sasakian space form $N^{2n+1}(c)$. Then $\bar N$ is a complex space form with constant holomorphic sectional curvature $c+3$ (see \cite{YK}). The fibration $\pi:N\rightarrow\bar{N}$ is called \textit{the Boothby-Wang fibration}. An example of such a fibration is the well known Hopf fibration $\pi:\mathbb{S}^{2n+1}\rightarrow \mathbb{C}P^{n}$.

Now, let us recall the definition of Frenet curves in a Riemannian manifold. Let $\gamma:I\subset\mathbb{R}\rightarrow M$ be a curve parametrized by
arc-length in a Riemannian manifold $M$. The curve $\gamma$ is called a {\it Frenet curve of osculating order} $r$, $1\leq r\leq 2n$, if there exist $r$ orthonormal vector fields $\{E_1=\gamma',\ldots,E_{r}\}$ along $\gamma$ such that
$$
\nabla^{M}_{E_{1}}E_{1}=\kappa_{1}E_{2},\quad
\nabla^{M}_{E_{1}}E_{i}=-\kappa_{i-1}E_{i-1} + \kappa_{i}E_{i+1},\quad
\nabla^{M}_{E_{1}}E_{r}=-\kappa_{r-1}E_{r-1},
$$
for $i=2,\dots,r-1$, where $\{\kappa_{1},\kappa_{2},\kappa_{3},\ldots,\kappa_{r-1}\}$ are positive
functions on $I$ called the {\it curvatures} of $\gamma$.

A Frenet curve of osculating order $r$ is called a {\it helix of
order $r$} if $\kappa_i=\cst>0$ for $1\leq i\leq r-1$. A helix
of order $2$ is called a {\it circle}, and a helix of order $3$ is
simply called a {\it helix}.

When $\gamma$ is a Frenet curve in a complex space form $\bar N^n(c+3)$, then its {\it complex torsions} are defined by $\tau_{ij}=\langle
E_i, J E_j \rangle$, $1\leq i<j\leq r$, where $(J,\langle,\rangle)$ is the complex structure on $\bar N^n(c+3)$. A helix of order $r$
is called a {\it holomorphic helix of order $r$} if all the complex
torsions are constant. It is easy to see that a circle is always a holomorphic circle (see \cite{MO}).

In order to find examples of anti-invariant pmc surfaces in the Sasakian space form $N^{2n+1}(c)$ we shall first study \textit{Hopf cylinders}, i.e., surfaces $\Sigma^2=\pi^{-1}(\gamma)$, where $\pi:N\rightarrow\bar{N}$ is the Boothby-Wang fibration and $\gamma:I\rightarrow \bar N^n(c+3)$ is a Frenet curve of osculating order $r$ in $\bar N^n(c+3)$. For any vector field $X$ tangent to $\bar N^n(c+3)$ we shall denote by $X^H$ its horizontal lift to $N^{2n+1}(c)$. For the Riemannian metrics on $\bar N^n(c+3)$ and $N^{2n+1}(c)$, we will use the same notation $\langle,\rangle$.

Since $\{E_1^H,\xi\}$ is a local orthonormal frame on $\Sigma^2$ and $E_i^H$, $1<i\leq r$, are normal vector fields, the mean curvature vector $\vec H$ of $\Sigma^2$ is given by
\begin{equation}\label{Hopf}
\vec H=\frac{1}{2}(\sigma(E_1^H,E_1^H)+\sigma(\xi,\xi))=\frac{1}{2}\kappa_1E_2^H,
\end{equation}
where $\kappa_1=\kappa_1\circ\pi$ and we used the first Frenet equation of $\gamma$ and O'Neill's equation for Riemannian submersions in the case of Boothby-Wang fibration, i.e., 
$$
\nabla^N_{X^H}Y^H=(\nabla^{\bar N}_XY)^H-\langle X^H,\varphi Y^H\rangle\xi,
$$
for any vector fields $X$ and $Y$ tangent to $\bar N(c+3)$ (see  \cite{O}). The first consequence of this equation is that $\kappa_1=2|\vec H|$ and then a Hopf cylinder $\Sigma^2=\pi^{-1}(\gamma)$ is minimal in $N^{2n+1}(c)$ if and only if the curve $\gamma$ is a geodesic in $\bar N^n(c+3)$.

In \cite{I}, it is proved that a non-minimal pmc Hopf cylinder $\Sigma^2$ lies in a Sasakian space form $N^3(c)$ of dimension $3$. In this case, since $\varphi E_1^H$ is orthogonal to $\Sigma^2$, it follows that $\vec H=\pm|\vec H|\varphi E_1^H$. Then equation \eqref{Hopf} implies that $\tau_{12}=\langle E_1,JE_2\rangle=\pm 1$, where $J$ is the complex structure on $\bar N^1(c+3)$. 

Next, from the second Frenet equation of $\gamma$, one obtains
\begin{align*}\label{eq:PMC1}
\nabla^N_{E_1^H}\vec H&=\frac{1}{2}\{(\nabla^{\bar N}_{E_1}(\kappa_1E_2))^H-\kappa_1\langle E_1^H,\varphi E_2^H\rangle\xi\}\\&=\frac{1}{2}(\kappa_1'E_2-\kappa_1^2E_1+\kappa_1\kappa_2E_3)^H-\frac{1}{2}\kappa_1\langle E_1^H,\varphi E_2^H\rangle\xi.
\end{align*}

It is easy to verify that $\nabla_{\xi}E_1^H=\nabla_{E_1^H}\xi=0$, where $\nabla$ is the connection on the surface, and then we get that $[\xi,E_1^H]=0$, which means $\nabla^N_{\xi}E_1^H=\nabla^N_{E_1^H}\xi=-\varphi E_1^H$, from where we obtain that $\nabla^N_{\xi}\vec H=-\varphi\vec H=\mp|\vec H|E_1^H$. 

We conclude with the following proposition.

\begin{proposition} A Hopf cylinder $\Sigma^2=\pi^{-1}(\gamma)$ in a Sasakian space form $N^{2n+1}(c)$ has parallel mean curvature vector field $\vec H$ if and only if either
\begin{enumerate}
\item $\gamma$ is a geodesic in the orbit space $\bar N^n(c+3)$, i.e., $\Sigma^2$ is a minimal surface in $N^{2n+1}(c)$; or

\item $\gamma$ is a circle in $\bar N^1(c+3)$ with curvature $\kappa_1=2|\vec H|=\cst$ and complex torsion $\tau_{12}=\pm 1$. In this case $\Sigma^2$ lies in $N^3(c)$.
\end{enumerate}
\end{proposition}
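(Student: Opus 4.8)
The plan is to test the pmc condition $\nabla^{\perp}\vec H=0$ against the two tangent directions of $\Sigma^2$, namely $E_1^H$ and $\xi$, and to extract the geometry from the normal components of $\nabla^N_{E_1^H}\vec H$ and $\nabla^N_{\xi}\vec H$ computed just above. Since $\{E_1^H,\xi\}$ is an orthonormal frame tangent to $\Sigma^2$ while $\{E_2^H,\ldots,E_r^H\}$ are orthonormal sections of the normal bundle, parallelism of $\vec H$ is equivalent to the simultaneous vanishing of $(\nabla^N_{E_1^H}\vec H)^{\perp}$ and $(\nabla^N_{\xi}\vec H)^{\perp}$, so the whole proof reduces to reading off these two normal parts.

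For the first direction I would use the displayed expression for $\nabla^N_{E_1^H}\vec H$. Its $E_1^H$ and $\xi$ terms are tangent to $\Sigma^2$, so the normal part is exactly $\frac12\kappa_1' E_2^H+\frac12\kappa_1\kappa_2 E_3^H$; as $E_2^H$ and $E_3^H$ are orthonormal normal fields, this vanishes if and only if $\kappa_1'=0$ and $\kappa_1\kappa_2=0$. For the second direction I would use $\nabla^N_{\xi}\vec H=-\varphi\vec H=-\frac12\kappa_1\varphi E_2^H$, where $\varphi E_2^H=(JE_2)^H$ is a unit horizontal vector whose only tangential component along $\Sigma^2$ is $\langle\varphi E_2^H,E_1^H\rangle E_1^H=\tau_{12}E_1^H$, so $|(\varphi E_2^H)^{\perp}|^2=1-\tau_{12}^2$. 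Hence $(\nabla^N_{\xi}\vec H)^{\perp}=0$ if and only if $\kappa_1=0$ or $\tau_{12}=\pm1$.

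Combining the two conditions, either $\kappa_1=0$, in which case $\gamma$ is a geodesic and $\Sigma^2$ is minimal, giving alternative $(1)$; or $\kappa_1$ is a nonzero constant with $\kappa_2=0$ and $\tau_{12}=\pm1$. In the latter case $\kappa_2=0$ means $\gamma$ has osculating order $2$, i.e. it is a circle with $\kappa_1=2|\vec H|$, while $\tau_{12}=\pm1$ forces its osculating $2$-plane to be $J$-invariant; thus $\gamma$ lies in a totally geodesic complex line $\bar N^1(c+3)$, and consequently $\Sigma^2=\pi^{-1}(\gamma)\subset\pi^{-1}(\bar N^1(c+3))=N^3(c)$, which is alternative $(2)$. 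The converse follows by reversing these computations: a geodesic gives a minimal (hence pmc) cylinder, and a circle with constant $\kappa_1$ and $\tau_{12}=\pm1$ makes both normal parts vanish.

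The step I expect to require the most care is the clean identity $\nabla^N_{\xi}\vec H=-\varphi\vec H$, which rests on $\nabla^N_{\xi}E_2^H=-\varphi E_2^H$, i.e. on the invariance $[\xi,E_2^H]=0$ of the horizontal lift under the fiber flow of the Boothby--Wang fibration; and the geometric conclusion that $\kappa_2=0$ together with $\tau_{12}=\pm1$ genuinely confines $\gamma$ to a totally geodesic complex line, so that the codimension drops enough to force $\Sigma^2\subset N^3(c)$ rather than merely $\Sigma^2\subset N^{2n+1}(c)$.
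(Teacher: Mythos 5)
Your proof is correct, and it takes a genuinely different route from the paper's at the decisive point. The computation in the $E_1^H$-direction is identical in both arguments: the Frenet equations plus O'Neill's formula give $(\nabla^N_{E_1^H}\vec H)^{\perp}=\frac{1}{2}\kappa_1'E_2^H+\frac{1}{2}\kappa_1\kappa_2E_3^H$, whence $\kappa_1'=0$ and $\kappa_1\kappa_2=0$, so either $\gamma$ is a geodesic or it is a circle of constant curvature $\kappa_1=2|\vec H|$. The difference lies in how the torsion condition and the reduction to $N^3(c)$ are obtained. The paper extracts nothing from the $\xi$-direction; instead it quotes Ishihara's theorem (\cite{I}) that a non-minimal pmc Hopf cylinder automatically lies in some $N^3(c)$, and only then observes that in dimension $3$ one has $\vec H=\pm|\vec H|\varphi E_1^H$, which together with $\vec H=\frac{1}{2}\kappa_1E_2^H$ forces $\tau_{12}=\pm1$; there $\nabla^N_{\xi}\vec H=-\varphi\vec H=\mp|\vec H|E_1^H$ is tangent, so the $\xi$-direction imposes no further condition. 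You reverse this order: working in the ambient $N^{2n+1}(c)$ you use $[\xi,E_2^H]=0$ to get $\nabla^N_{\xi}\vec H=-\frac{1}{2}\kappa_1\varphi E_2^H$ with $|(\varphi E_2^H)^{\perp}|^2=1-\tau_{12}^2$, so parallelism itself yields $\kappa_1=0$ or $\tau_{12}=\pm1$, and you then recover the codimension reduction geometrically: $\tau_{12}=\pm1$ gives $JE_2=\pm E_1$, so $\Span\{E_1,JE_1\}$ is a parallel $J$-invariant plane along $\gamma$, confining the circle to a totally geodesic $\bar N^1(c+3)$ and hence $\Sigma^2$ to $\pi^{-1}(\bar N^1(c+3))=N^3(c)$. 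What your route buys is self-containedness: it effectively reproves, in this special case, the reduction the paper cites from \cite{I}. The cost is the two standard facts you rightly flag — that a curve whose osculating plane is parallel and $J$-invariant lies in a totally geodesic complex line of the complex space form, and that the Boothby--Wang preimage of such a line is a totally geodesic Sasakian $N^3(c)$ (the paper invokes \cite{YK} for facts of this kind elsewhere). Both are standard, so your argument is complete as outlined.
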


\begin{remark} In \cite{MA}, it is proved that for any positive number $\kappa$ and for any number $\tau$, such that $|\tau|\leq 1$, there exits a circle with curvature $\kappa$ and complex torsion $\tau$ in any complex space form. Therefore, non-minimal pmc Hopf cylinders do exist in $N^3(c)$.
\end{remark}

\begin{remark} It is easy to see that neither $Q_1^{(2,0)}$ nor $Q_2^{(2,0)}$ can vanish on a non-minimal pmc Hopf cylinder, since this would imply that $\vec H$ is orthogonal to $\varphi E_1^H$, which is a contradiction.
\end{remark}

\subsection{Integral surfaces} Let $\Sigma^2$ be an integral non-minimal pmc surface in a $7$-dimensional Sasakian space form $N^7(c)$ such that $Q_1^{(2,0)}=0$ on $\Sigma^2$. It is easy to see that, since our surface is integral, this condition is equivalent with that that $\Sigma^2$ is pseudo-umbilical. From Proposition \ref{p_umb} we know that if $\{E_1,E_2\}$ is an orthonormal frame field tangent to the surface, then $\{E_3=\varphi E_1,E_4=\varphi E_2,E_5=\vec H/|\vec H|,E_6=\varphi E_5,E_7=\xi\}$ is a normal orthonormal frame field, where $\vec H$ is the mean curvature vector field of $\Sigma^2$. We note that also $Q_2^{(2,0)}=0$ on such a surface. 

Since $\Sigma^2$ is integral, it is easy to verify that
\begin{equation}\label{int0}
\langle\sigma(E_i,E_j),\varphi E_k\rangle=\langle\sigma(E_i,E_k),\varphi E_j\rangle,\quad\forall i,j,k=1,2
\end{equation}
and
\begin{equation}\label{int1}
A_7=A_{E_7}=0,
\end{equation}
and, as the surface is pseudo-umbilical, we also have that
\begin{equation}\label{int2}
A_5=A_{E_5}=\left(\begin{array}{cc}|\vec H|&0\\ 0&|\vec H|\end{array}\right).
\end{equation}

In the following, we shall choose the tangent frame field $\{E_1,E_2\}$ such that it diagonalizes $A_3=A_{E_3}$, and then we have
\begin{equation}\label{int3}
A_3=\left(\begin{array}{cc}a&0\\0&-a\end{array}\right),
\end{equation}
where $a:\Sigma^2\rightarrow\mathbb{R}$ is a function on the surface.

Next, using equation \eqref{int0}, we obtain that
$$
\langle\sigma(E_1,E_1),\varphi E_2\rangle=\langle\sigma(E_1,E_2),\varphi E_1\rangle=0,
$$
and 
$$
\langle\sigma(E_1,E_1),\varphi E_1\rangle=-\langle\sigma(E_2,E_2),\varphi E_1\rangle=-\langle\sigma(E_1,E_2),\varphi E_2\rangle=a,
$$
which means that 
\begin{equation}\label{int4}
A_4=A_{E_4}=\left(\begin{array}{cc}0&-a\\-a&0\end{array}\right).
\end{equation}

Since for any tangent vector field $X$ we have $\langle\vec H,\varphi X\rangle=0$ and $\varphi X$ is normal, it follows that
$$
\langle\nabla^{\perp}_Y\vec H,\varphi X\rangle+\langle\nabla^N_Y\varphi X,\vec H\rangle=0,
$$
which gives, using that $\vec H$ is parallel and again that $\varphi\vec H$ is a normal vector field,
$$
\langle\sigma(X,Y),\varphi\vec H\rangle=0,
$$
for any vector field $Y$ tangent to the surface. Therefore, we have
\begin{equation}\label{int5}
A_6=A_{E_6}=0.
\end{equation} 

Next, using the equation of Gauss \eqref{Gauss}, the expression \eqref{eq:curv} of $R^N$ and equations \eqref{int1}--\eqref{int5}, one obtains the Gaussian curvature $K$ of the surface as 
\begin{align}\label{Gcurv}
K&=\langle R(E_1,E_2)E_2,E_1\rangle\\\nonumber &=\langle R^N(E_1,E_2)E_2,E_1\rangle+\langle\sigma(E_1,E_1),\sigma(E_2,E_2)\rangle-|\sigma(E_1,E_2)|^2\\\nonumber &=\frac{c+3}{4}-2a^2+|H|^2.
\end{align}

On the other hand, since $\nabla_{E_i}E_1$ and $\nabla_{E_j}E_2$ are orthogonal, we have
\begin{align}\label{Gcurvdef}
K&=\langle R(E_1,E_2)E_2,E_1\rangle\\\nonumber &=\langle\nabla_{E_1}\nabla_{E_2}E_2,E_1\rangle-\langle\nabla_{E_2}\nabla_{E_1}E_2,E_1\rangle-\langle\nabla_{[E_1,E_2]}E_2,E_1\rangle\\\nonumber &=E_1(\langle\nabla_{E_2}E_2,E_1\rangle)+E_2(\langle\nabla_{E_1}E_1,E_2\rangle)-\langle\nabla_{E_1}E_1,E_2\rangle^2-\langle\nabla_{E_2}E_2,E_1\rangle^2.
\end{align}

\begin{lemma}\label{normal} The following equations hold on $\Sigma^2$$:$
\begin{enumerate}
\item $\nabla^{\perp}_{E_i}\varphi E_i=\langle\nabla_{E_i}E_i,E_j\rangle\varphi E_j+\varphi\vec H+\xi$, with $i\neq j$$;$

\item $\nabla^{\perp}_{E_i}\varphi E_j=\langle\nabla_{E_i}E_j,E_i\rangle\varphi E_i$, if $i\neq j$$;$

\item $\nabla^{\perp}_X\varphi\vec H=-|\vec H|^2\varphi X$ and $\nabla_X^{\perp}\xi=-\varphi X$.
\end{enumerate}
\end{lemma}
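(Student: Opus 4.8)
The plan is to prove each of the three identities by computing the full ambient covariant derivative $\nabla^N$ of the relevant normal field and then reading off its normal component, discarding the tangential part. The only ingredients needed are the defining equations of the Sasakian structure, $(\nabla^N_U\varphi)V=\langle U,V\rangle\xi-\eta(V)U$ and $\nabla^N_U\xi=-\varphi U$, together with the Gauss and Weingarten formulas and the explicit shape operators \eqref{int1}--\eqref{int5}. Throughout I would use that $\Sigma^2$ is integral, so that $\eta(E_i)=0$, $\langle E_i,\varphi E_j\rangle=0$, and in particular $\varphi^2E_i=-E_i$, while $\xi$, $\varphi E_i$ and $\varphi\vec H$ are all normal vector fields. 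From Lemma \ref{l1umb} we also have $\eta(\vec H)=0$.

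For identities (1) and (2) I would first record, from \eqref{int2}--\eqref{int5} with $E_3=\varphi E_1$ and $E_4=\varphi E_2$, the second fundamental form values $\sigma(E_1,E_1)=aE_3+\vec H$, $\sigma(E_2,E_2)=-aE_3+\vec H$ and $\sigma(E_1,E_2)=-aE_4$. Then, writing $\nabla^N_{E_i}\varphi E_k=(\nabla^N_{E_i}\varphi)E_k+\varphi\nabla^N_{E_i}E_k$ and expanding $\nabla^N_{E_i}E_k=\nabla_{E_i}E_k+\sigma(E_i,E_k)$, the structure equation contributes $\langle E_i,E_k\rangle\xi$, the Levi-Civita term contributes $\langle\nabla_{E_i}E_k,E_j\rangle\varphi E_j$ (the component along $E_k$ dropping out), and $\varphi\sigma(E_i,E_k)$ splits into a $\varphi\vec H$ piece and a piece along $\varphi E_3$ or $\varphi E_4$. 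The key observation is that $a\varphi E_3=a\varphi^2E_1=-aE_1$ and $-a\varphi E_4=aE_2$ are \emph{tangential}, so they vanish under the normal projection; what survives is exactly the stated right-hand sides, with $\langle E_i,E_i\rangle\xi=\xi$ in (1) and $\langle E_i,E_j\rangle\xi-\eta(E_j)E_i=0$ in (2).

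Identity (3) is the cleanest. For $\nabla^\perp_X\xi$ I would use $\nabla^N_X\xi=-\varphi X$ directly; since $A_\xi=0$ by \eqref{int1} and $\varphi X$ is normal, the derivative is already normal and equals $-\varphi X$. For $\nabla^\perp_X\varphi\vec H$, the structure equation gives $(\nabla^N_X\varphi)\vec H=\langle X,\vec H\rangle\xi-\eta(\vec H)X=0$, because $\vec H$ is normal and $\eta(\vec H)=0$, whence $\nabla^N_X\varphi\vec H=\varphi\nabla^N_X\vec H=-\varphi A_{\vec H}X=-|\vec H|^2\varphi X$, using that $\vec H$ is parallel and pseudo-umbilical, \eqref{int2}; this vector is normal. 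I expect the only real obstacle to be bookkeeping: consistently separating the tangential and normal parts of $\varphi\sigma(E_i,E_k)$ and repeatedly invoking $\eta(E_i)=0$ and $\eta(\vec H)=0$ so that the $\varphi^2$ terms collapse correctly. No genuine analytic difficulty arises.
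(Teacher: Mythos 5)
Your proof is correct, and it rests on exactly the same ingredients as the paper's: the Sasakian identities $(\nabla^N_U\varphi)V=\langle U,V\rangle\xi-\eta(V)U$ and $\nabla^N_U\xi=-\varphi U$, the Gauss and Weingarten formulas, integrality of $\Sigma^2$ (so $\eta(E_i)=0$, $\eta(\vec H)=0$, and $\varphi T\Sigma^2$, $\varphi\vec H$, $\xi$ are normal), and the fact that $\vec H$ is parallel and an umbilical direction. The organization, however, is dual to the paper's. The paper never forms the full ambient derivative: it computes the components of $\nabla^{\perp}_X\varphi E_i$ against each normal frame field, namely $\langle\nabla^{\perp}_X\varphi E_i,\vec H\rangle=0$, $\langle\nabla^{\perp}_X\varphi E_i,\varphi\vec H\rangle=|\vec H|^2\langle E_i,X\rangle$ and $\langle\nabla^{\perp}_X\varphi E_i,\varphi E_j\rangle=\langle\nabla_XE_i,E_j\rangle$, and obtains item (3) and the $\xi$-coefficient directly from $\nabla^N_X\xi=-\varphi X$. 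You instead expand $\nabla^N_{E_i}\varphi E_k=(\nabla^N_{E_i}\varphi)E_k+\varphi\nabla^N_{E_i}E_k$, substitute the explicit second fundamental form from \eqref{int2}--\eqref{int5}, and project, discarding the tangential pieces such as $a\varphi^2E_1=-aE_1$. Your route buys transparency: all the coefficients, in particular the $\varphi\vec H+\xi$ terms of item (1), appear in a single pass, making explicit what the paper's rather terse closing sentence leaves to the reader. The paper's pairing computation buys generality: it never invokes the function $a$ (equations \eqref{int3} and \eqref{int4}), so it establishes items (1) and (2) for an arbitrary orthonormal tangent frame, not only the one diagonalizing $A_3$. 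That said, your dependence on the special frame is only apparent: the facts you actually use about $\sigma$ are $A_6=A_7=0$, i.e.\ $\im\sigma\subset\Span\{\varphi E_1,\varphi E_2,\vec H\}$, and since $\varphi$ maps $\Span\{\varphi E_1,\varphi E_2\}$ back into $T\Sigma^2$, that part is killed by the normal projection whatever the entries of $A_3$ and $A_4$ are.
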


\begin{proof} Since $\vec H$ is parallel and umbilical, we have $\langle\nabla^{\perp}_X\varphi E_i,\vec H\rangle=0$ and also
$$
\langle\nabla^{\perp}_X\varphi E_i,\varphi\vec H\rangle =-\langle\varphi E_i,\nabla^N_X\varphi\vec H\rangle=-\langle\varphi E_i,\varphi\nabla^N_X\vec H\rangle=\langle\varphi E_i,\varphi A_{\vec H}X\rangle=|\vec H|^2\langle E_i,X\rangle,
$$
for any vector field $X$ tangent to the surface.

Now, for $i\neq j$, one obtains
$$
\langle\nabla^{\perp}_X\varphi E_i,\varphi E_j\rangle=\langle\nabla^{N}_X\varphi E_i,\varphi E_j\rangle=\langle\varphi\nabla^{N}_X E_i,\varphi E_j\rangle=\langle\nabla^{N}_X E_i, E_j\rangle=\langle\nabla_X E_i, E_j\rangle.
$$

Finally, we again use that $\vec H$ is parallel and umbilical, $\nabla^N_X\xi=-\varphi X$ and $\varphi X$ is normal, to conclude.
\end{proof}

\begin{lemma}\label{deriv_a} The derivatives of the function $a:\Sigma^2\rightarrow\mathbb{R}$ are given by
$$
E_1(a)=3a\langle\nabla_{E_2}E_2,E_1\rangle\quad\textnormal{and}\quad E_2(a)=3a\langle\nabla_{E_1}E_1,E_2\rangle.
$$
\end{lemma}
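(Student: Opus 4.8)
The plan is to differentiate the function $a$, which is encoded in the shape operator $A_3=A_{\varphi E_1}$, and to feed the resulting derivative of the second fundamental form into the Codazzi equation. The first preparatory step is to read off from \eqref{int2}--\eqref{int5} the full set of components of $\sigma$ in the normal frame $\{E_3,\dots,E_7\}$. A short computation gives
\begin{equation*}
\sigma(E_1,E_1)=a\,\varphi E_1+\vec H,\qquad \sigma(E_2,E_2)=-a\,\varphi E_1+\vec H,\qquad \sigma(E_1,E_2)=-a\,\varphi E_2,
\end{equation*}
so that in particular $a=\langle\sigma(E_1,E_1),\varphi E_1\rangle=-\langle\sigma(E_2,E_2),\varphi E_1\rangle$. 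These formulas will let me turn every inner product appearing below into an expression involving only $a$ and the connection coefficients $\langle\nabla_{E_i}E_i,E_j\rangle$.

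To obtain $E_2(a)$ I would start from $E_2(a)=E_2\langle\sigma(E_1,E_1),\varphi E_1\rangle$, expand using that both entries are normal, and dispose of the term $\langle\sigma(E_1,E_1),\nabla^{\perp}_{E_2}\varphi E_1\rangle$ by Lemma \ref{normal}(2): this produces only a $\varphi E_2$-component, and $\sigma(E_1,E_1)$ has no $\varphi E_2$-component. What remains is $\langle\nabla^{\perp}_{E_2}\sigma(E_1,E_1),\varphi E_1\rangle$; after subtracting the tangential correction $2\sigma(\nabla_{E_2}E_1,E_1)$, which is proportional to $\varphi E_2$ and so contributes nothing against $\varphi E_1$, this equals $\langle(\nabla^{\perp}_{E_2}\sigma)(E_1,E_1),\varphi E_1\rangle$. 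I then apply the Codazzi equation \eqref{Codazzi} in the form $(\nabla^{\perp}_{E_2}\sigma)(E_1,E_1)=(\nabla^{\perp}_{E_1}\sigma)(E_2,E_1)+(R^N(E_2,E_1)E_1)^{\perp}$ and expand the right-hand side term by term against $\varphi E_1$, using the three explicit $\sigma$-formulas together with Lemma \ref{normal}. Collecting the pieces and using $\langle\nabla_{E_1}E_2,E_1\rangle=-\langle\nabla_{E_1}E_1,E_2\rangle$ yields $E_2(a)=3a\langle\nabla_{E_1}E_1,E_2\rangle$. The formula for $E_1(a)$ follows from the parallel computation, differentiating $-a=\langle\sigma(E_2,E_2),\varphi E_1\rangle$ and invoking Codazzi in the form $(\nabla^{\perp}_{E_1}\sigma)(E_2,E_2)=(\nabla^{\perp}_{E_2}\sigma)(E_1,E_2)+(R^N(E_1,E_2)E_2)^{\perp}$.

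The one structural point that makes everything collapse is that the curvature terms $\langle R^N(E_2,E_1)E_1,\varphi E_1\rangle$ and $\langle R^N(E_1,E_2)E_2,\varphi E_1\rangle$ vanish identically. This is immediate from \eqref{eq:curv}: since $\Sigma^2$ is integral we have $\eta(E_i)=0$ and $\langle E_i,\varphi E_j\rangle=0$ for $i,j=1,2$, so the $\frac{c+3}{4}$-term is a tangent vector (hence orthogonal to the normal field $\varphi E_1$) and every summand of the $\frac{c-1}{4}$-term carries a factor $\eta(\cdot)$ or $\langle E_\cdot,\varphi E_\cdot\rangle$ that is zero. I expect the only genuine labor to be bookkeeping: the right-hand side of each Codazzi identity unfolds into a covariant derivative of a normal field plus two tangential-correction terms, and each of these must be rewritten through Lemma \ref{normal} and the $\sigma$-formulas before the inner product with $\varphi E_1$ is taken. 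The coefficient $3$ in the final answer arises precisely from combining the $a\langle\nabla_{E_i}E_i,E_j\rangle$ supplied by $\nabla^{\perp}_{E_i}\varphi E_i$ with the two equal contributions coming from the tangential corrections, so tracking these coefficients carefully is where an error would most easily slip in.
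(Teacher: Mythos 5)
Your proposal is correct and follows essentially the same route as the paper: the explicit components of $\sigma$ obtained from \eqref{int1}--\eqref{int5}, Lemma \ref{normal} for the normal derivatives of the $\varphi E_i$, and the Codazzi equation \eqref{Codazzi} combined with the observation that the normal part of $R^N$ vanishes on an integral surface. The only cosmetic difference is that the paper computes $(\nabla^{\perp}_{E_1}\sigma)(E_2,E_1)$ and $(\nabla^{\perp}_{E_2}\sigma)(E_1,E_1)$ as full normal vectors and reads off both derivative formulas from the $\varphi E_1$- and $\varphi E_2$-components of a single Codazzi identity, whereas you run two separate Codazzi identities each projected onto $\varphi E_1$; your vanishing arguments and the bookkeeping producing the coefficient $3$ check out either way.
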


\begin{proof} Using equations \eqref{int1}--\eqref{int5}, Lemma \ref{normal} and the fact that $\vec H$ is parallel, we can compute
\begin{align*}
(\nabla^{\perp}_{E_1}\sigma)(E_2,E_1)=&\nabla^{\perp}_{E_1}\sigma(E_2,E_1)-\sigma(\nabla_{E_1}E_2,E_1)-\sigma(E_2,\nabla_{E_1}E_1)\\=&-E_1(a)\varphi E_2-a\nabla^{\perp}_{E_1}\varphi E_2-\langle\nabla_{E_1}E_2,E_1\rangle\sigma(E_1,E_1)\\&-\langle\nabla_{E_1}E_1,E_2\rangle\sigma(E_2,E_2)\\=&-E_1(a)\varphi E_2+3a\langle\nabla_{E_1}E_1,E_2\rangle\varphi E_1
\end{align*}
and, in the same way,
\begin{align*}
(\nabla^{\perp}_{E_2}\sigma)(E_1,E_1)=&\nabla^{\perp}_{E_2}\sigma(E_1,E_1)-2\sigma(\nabla_{E_2}E_1,E_1)\\=&E_2(a)\varphi E_1-3a\langle\nabla_{E_2}E_2,E_1\rangle\varphi E_2.
\end{align*}

From the Codazzi equation \eqref{Codazzi} and using \eqref{eq:curv}, one obtains
$$
0=(R^N(E_1,E_2)E_1)^{\perp}=(\nabla^{\perp}_{E_1}\sigma)(E_2,E_1)-(\nabla^{\perp}_{E_2}\sigma)(E_1,E_1),
$$
which leads to the conclusion.
\end{proof}

Now, we are ready to prove the following theorem.

\begin{theorem}\label{thm:int} Let $\Sigma^2$ be an isometrically immersed integral complete non-minimal surface in a Sasakian space form $N^7(c)$ with parallel mean curvature vector field $\vec H$ and with non-negative Gaussian curvature $K$. If $Q_1^{(2,0)}$ vanishes on the surface or, equivalently, if $\Sigma^2$ is pseudo-umbilical, then $|\vec H|^2\geq-(c+3)/4$ and one of the following holds$:$
\begin{enumerate}

\item $\Sigma^2$ is a cmc totally umbilical surface in a space form $M^3((c+3)/4)$, with constant sectional curvature $(c+3)/4$ and of dimension $3$, immersed in $N^7(c)$ as an integral submanifold$;$ or

\item $\Sigma^2$ is flat and it is the standard product $\gamma_1\times\gamma_2$, where $\gamma_1:\mathbb{R}\rightarrow N^7(c)$ is a Legendre helix of osculating order $4$ in $N^7(c)$ with curvatures 
$$
\kappa_1=\sqrt{a^2+|\vec H|^2},\quad \kappa_2=\frac{a\sqrt{1+|\vec H|^2}}{\sqrt{a^2+|\vec H|^2}},\quad\textnormal{and}\quad\kappa_3=\frac{|\vec H|\sqrt{1+|\vec H|^2}}{\sqrt{a^2+|\vec H|^2}},
$$
and $\gamma_2:\mathbb{R}\rightarrow N^7(c)$ is a Legendre circle in $N^7(c)$ with curvature $\kappa=\sqrt{a^2+|\vec H|^2}$, where $a^2=(c+3)/8+|\vec H|^2/2$.
\end{enumerate}
\end{theorem}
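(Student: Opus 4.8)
The plan is to distill the hypotheses into the single scalar function $a$ of \eqref{int3}, which measures the failure of $\Sigma^2$ to be umbilical in the anti-invariant normal directions $\varphi E_1,\varphi E_2$. Reading off \eqref{int1}--\eqref{int5} gives
$$
\sigma(E_1,E_1)=a\varphi E_1+\vec H,\quad \sigma(E_2,E_2)=-a\varphi E_1+\vec H,\quad \sigma(E_1,E_2)=-a\varphi E_2,
$$
so that $\Sigma^2$ is totally umbilical exactly when $a\equiv 0$, and $a^2=\frac14(|\sigma|^2-2|\vec H|^2)$ is a globally defined smooth function. I would first produce a Bochner-type identity for $a$, convert it into a subharmonicity statement via the curvature formula \eqref{Gcurv}, and then use completeness together with $K\ge 0$ to force $a^2$ to be constant; the two alternatives $a\equiv 0$ and $a=\cst\neq 0$ are then identified geometrically.

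For the analytic heart, write $\nabla_{E_1}E_1=\alpha E_2$ and $\nabla_{E_2}E_2=\beta E_1$, so that Lemma \ref{deriv_a} reads $E_1(\log a)=3\beta$, $E_2(\log a)=3\alpha$ on $\{a\neq 0\}$. Substituting these into the intrinsic expression \eqref{Gcurvdef} for $K$ and computing the Laplacian $\Delta g=\sum_i(E_i(E_i g)-(\nabla_{E_i}E_i)g)$ collapses everything to the clean relation $\Delta\log a=3K$. Expanding, on $\{a\neq 0\}$ one gets
$$
\Delta(a^2)=6a^2K+4|\nabla a|^2\ge 0,
$$
and at any zero of the nonnegative smooth function $a^2$ the Laplacian is $\ge 0$ as well since $a^2$ attains a minimum there; hence $a^2$ is subharmonic on all of $\Sigma^2$. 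On the other hand \eqref{Gcurv} reads $K=\frac{c+3}{4}+|\vec H|^2-2a^2$, so $K\ge 0$ yields $a^2\le\frac12\big(\frac{c+3}{4}+|\vec H|^2\big)$; in particular $|\vec H|^2\ge-(c+3)/4$ (the asserted bound) and $a^2$ is bounded above.

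A complete surface with $K\ge 0$ has at most quadratic area growth, hence is parabolic, so the bounded subharmonic function $a^2$ is constant. Putting $\Delta(a^2)=0$ in the identity forces $a^2K\equiv 0$ and $|\nabla a|\equiv 0$: either $a\equiv 0$, which is case (1), or $a$ is a nonzero constant with $K\equiv 0$, which is case (2); in the latter $K=0$ in \eqref{Gcurv} pins down $a^2=\frac{c+3}{8}+\frac{|\vec H|^2}{2}$. When $a\equiv 0$ the surface is totally umbilical with $\sigma(X,Y)=\langle X,Y\rangle\vec H$ and constant $K=\frac{c+3}{4}+|\vec H|^2$; a reduction-of-codimension argument in the spirit of Propositions \ref{lemma_parallel}--\ref{p_numb}, now with first normal space just $\mathbb{R}\vec H$, confines $\Sigma^2$ to a $3$-dimensional integral submanifold, and the Gauss equation identifies it as a space form of curvature $\frac{c+3}{4}$. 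When $a$ is a nonzero constant, Lemma \ref{deriv_a} forces $\alpha=\beta=0$, so $\{E_1,E_2\}$ is a parallel frame and $\Sigma^2$ splits as the product of the integral curves of $E_1$ and $E_2$; computing $\nabla^N_{E_1}E_1=a\varphi E_1+\vec H$ and iterating the Frenet equations (differentiating the normal frame via Lemma \ref{normal}) exhibits these as the Legendre helix and Legendre circle with the stated curvatures.

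The main obstacle is not the subharmonicity computation, which the preparatory lemmas are tailored to make automatic, but the two geometric identifications: carrying out the codimension reduction in case (1) so as to recognize the constant-curvature $\frac{c+3}{4}$ space form as an integral submanifold, and, in case (2), the bookkeeping of the higher Frenet data, where one must verify that the osculating order is exactly $4$ for $\gamma_1$ and $2$ for $\gamma_2$, that both curves are Legendre, and only then match the explicit expressions for $\kappa_1,\kappa_2,\kappa_3$ and $\kappa$.
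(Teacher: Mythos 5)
Your proposal is correct and follows essentially the same route as the paper: subharmonicity of $a^2$ via Lemma \ref{deriv_a} and \eqref{Gcurvdef}, parabolicity from $K\geq 0$ plus the bound $2a^2\leq (c+3)/4+|\vec H|^2$ from \eqref{Gcurv} forcing $a^2$ constant, then the dichotomy $a\equiv 0$ (reduction to $M^3((c+3)/4)$ via the parallel bundle $\Span\{\vec H\}$ and the Eschenburg--Tribuzy theorem) versus $a=\cst\neq 0$ (flat, parallel frame, de Rham splitting, Frenet computation of the Legendre helix and circle). Your reformulation $\Delta\log a=3K$, i.e.\ $\Delta(a^2)=6a^2K+4|\nabla a|^2$, is identical to the paper's identity since $|\nabla a|^2=9a^2\bigl(\langle\nabla_{E_1}E_1,E_2\rangle^2+\langle\nabla_{E_2}E_2,E_1\rangle^2\bigr)$, and your extra care about the global smoothness of $a^2=\frac{1}{4}(|\sigma|^2-2|\vec H|^2)$ and its zero set is a refinement, not a deviation.
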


\begin{proof} First, we shall prove that $a^2:\Sigma^2\rightarrow\mathbb{R}$ is a subharmonic function. Indeed, after a straightforward computation, using Lemma \ref{deriv_a} and equation \eqref{Gcurvdef}, we have
\begin{align*}
\Delta a^2&=\sum_{i=1}^2(E_i(E_i(a^2))-(\nabla_{E_i}E_i)(a^2))\\&=6a^2(E_1(\langle\nabla_{E_2}E_2,E_1\rangle)+E_2(\langle\nabla_{E_1}E_1,E_2\rangle)+5\langle\nabla_{E_2}E_2,E_1\rangle^2+5\langle\nabla_{E_1}E_1,E_2\rangle^2)\\&=6a^2\{K+6(\langle\nabla_{E_2}E_2,E_1\rangle^2+\langle\nabla_{E_1}E_1,E_2\rangle^2)\}\geq 0.
\end{align*}

Since $K\geq 0$, it follows that $\Sigma^2$ is a parabolic space. Moreover, from equation \eqref{Gcurv}, we get that $2a^2\leq (c+3)/4+|\vec H|^2$, which means that $a^2$ is a bounded subharmonic function on a parabolic space and, therefore, due to a result in \cite{H}, a constant. Then, either $a$ vanishes or the surface is flat and $\nabla E_1=\nabla E_2=0$.

\textbf{Case I: $a=0$.} In this case, $\langle\sigma(X,Y),V\rangle=0$ for any normal vector field $V$ orthogonal to $\vec H$, which means that the normal subbundle
$L=\Span\{\im\sigma\}=\Span\{\vec H\}$ is parallel. Also, it is easy to see that $T\Sigma^2\oplus L$ is invariant by $\bar T$ and $\bar R$, where $\bar T$ and $\bar R$ are the torsion and the curvature, respectively, of Okumura's connection. Moreover, the characteristic vector field $\xi$ is orthogonal to $T\Sigma^2\oplus L$. Therefore, we can use \cite[Theorem 2]{ET} and \cite[Proposition 8.1]{B} to conclude that $\Sigma^2$ lies is a space form $M^3((c+3)/4)$ immersed in $N^7(c)$ as an integral submanifold.

\textbf{Case II: $a\neq 0$.} We have that $K=0$, which gives $a^2=(c+3)/8+|H|^2/2$, and $\nabla E_1=\nabla E_2=0$. Since $E_1$ and $E_2$ are parallel, they determine two distributions which are mutually orthogonal, smooth, involutive and parallel. Therefore, from the de Rham Decomposition Theorem follows, also taking into account that the surface is complete and using its universal cover if necessary, that $\Sigma^2$ is a product $\gamma_1\times\gamma_2$, where $\gamma_i:\mathbb{R}\rightarrow N^7(c)$, $i\in\{1,2\}$, are integral curves of $E_1$ and $E_2$, respectively, parametrized by arc-length, i.e., $\gamma_1'=E_1$ and $\gamma_2'=E_2$ (see \cite{KN}). Moreover, since the surface is integral, the two curves are Legendre curves. In the following, we shall determine their curvatures.

Let $\kappa_i$, $1\leq i<7$, be the curvatures of $\gamma_1$ and $\{X_j^1\}$, $1\leq j<8$, be its Frenet frame field, where $X_1^1=E_1$. From equations \eqref{int1}--\eqref{int5}, we have 
$$
\nabla^N_{E_1}E_1=\sigma(E_1,E_1)=aE_3+|\vec H|E_5
$$
and then, from the first Frenet equation of $\gamma_1$, it follows 
$$
\kappa_1=\sqrt{a^2+|\vec H|^2}\quad\textnormal{and}\quad X_2^1=-\frac{1}{\sqrt{a^2+|\vec H|^2}}(aE_3+|\vec H|E_5).
$$

Next, using Lemma \ref{normal}, we get, after a straightforward computation,
$$
\nabla^N_{E_1}X_2^1=-\kappa_1E_1+\frac{a}{\kappa_1}(\varphi\vec H+\xi),
$$
which shows that 
$$
\kappa_2=\frac{a\sqrt{1+|\vec H|^2}}{\sqrt{a^2+|\vec H|^2}}\quad\textnormal{and}\quad X_3^1=\frac{a}{\sqrt{1+|\vec H|^2}}(|\vec H|E_6+E_7).
$$
It follows that $\nabla^N_{E_1}X_3^1=-\sqrt{1+|\vec H|^2}E_3$
and, from the third Frenet equation,
$$
\kappa_3=\frac{|\vec H|\sqrt{1+|\vec H|^2}}{\sqrt{a^2+|\vec H|^2}}\quad\textnormal{and}\quad X_4^1=-\frac{1}{\sqrt{a^2+|\vec H|^2}}(-|\vec H|E_3+aE_5).
$$
Finally, we obtain $\nabla^N_{E_1}X_4^1=-\kappa_3X_3^1$, which shows that $\gamma_1$ is a helix of osculating order $4$. 

In the case of the curve $\gamma_2$, consider its Frenet frame field $\{X_j^2\}$, $1\leq j<8$, with $X_1^2=E_2$, and, again using equations \eqref{int1}--\eqref{int5}, we have
$$
\nabla^N_{E_2}E_2=\sigma(E_2,E_2)=-aE_3+|\vec H|E_5,
$$
which means that its first curvature is $\kappa=\sqrt{a^2+|\vec H|^2}$ and 
$$
X_2^2=\frac{1}{\sqrt{a^2+|\vec H|^2}}(-aE_3+|\vec H|E_5).
$$
Then, using Lemma \ref{normal}, we obtain $\nabla^N_{E_2}X_2^2=-\kappa E_2$. Therefore, the curve $\gamma_2$ is a circle in $N^7(c)$.
\end{proof}

Using a result in \cite{C} we have the following corollary.

\begin{corollary}\label{theorem3} An integral pmc $2$-sphere in $N^7(c)$, with $\vec H\neq 0$, is a round sphere in a space form $M^3((c+3)/4)$. 
\end{corollary}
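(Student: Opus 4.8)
The plan is to run a Hopf-type argument adapted to the pmc setting and reduce the sphere to Case (1) of Theorem \ref{thm:int}. First I would record that $\Sigma^2$ is diffeomorphic to $S^2$, hence a compact Riemann surface of genus $0$, and in particular complete. Since $\Sigma^2$ is integral, it is anti-invariant, so the proposition of Subsection 4.1 applies and $Q_1^{(2,0)}$ is a holomorphic quadratic differential on $\Sigma^2$. The classical fact (this is the result invoked from \cite{C}) that a holomorphic quadratic differential on the $2$-sphere vanishes identically then forces $Q_1^{(2,0)}\equiv 0$. By the remark opening Subsection 4.3, for an integral surface this is equivalent to $\Sigma^2$ being pseudo-umbilical, so the structure equations \eqref{int1}--\eqref{int5} together with Lemmas \ref{normal} and \ref{deriv_a} become available; in particular the Gaussian curvature is $K=(c+3)/4-2a^2+|\vec H|^2$.

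Next I would invoke Theorem \ref{thm:int}. The only hypothesis not yet checked is $K\ge 0$, and to secure it I would exploit compactness. From the computation in the proof of Theorem \ref{thm:int} one has the identity $\Delta a^2=6a^2K+4|\nabla a|^2$ on $\Sigma^2$. On the closed surface the divergence theorem gives $\int_{\Sigma^2}\Delta a^2\,dA=0$, while Gauss--Bonnet gives $\int_{\Sigma^2}K\,dA=4\pi$; combining these with $K=(c+3)/4-2a^2+|\vec H|^2$ and the maximum principle for $a^2$ at its extrema, I would argue that $a^2$ is constant. Once $a^2$ is constant, the dichotomy in the proof of Theorem \ref{thm:int} applies verbatim: either $a\equiv 0$, or $\Sigma^2$ is flat with $\nabla E_1=\nabla E_2=0$.

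The flat alternative is then excluded by topology: a complete flat surface with parallel tangent frame splits as a standard product $\gamma_1\times\gamma_2$ of two complete curves (this is exactly Case (2)), which is diffeomorphic to a product of lines and circles and so is never a $2$-sphere; equivalently, a flat metric on $S^2$ is impossible since $\int_{S^2}K\,dA=4\pi\neq 0$. Hence $a\equiv 0$, and we land in Case (1) of Theorem \ref{thm:int}: $\Sigma^2$ is a cmc totally umbilical surface immersed as an integral submanifold in a $3$-dimensional space form $M^3((c+3)/4)$. A compact totally umbilical surface with $\vec H\neq 0$ in such a space form is a geodesic sphere, i.e.\ a round sphere, which is the assertion; the bound $|\vec H|^2\ge -(c+3)/4$ supplied by Theorem \ref{thm:int} guarantees that the intrinsic curvature $(c+3)/4+|\vec H|^2$ is positive, consistent with the spherical topology.

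The main obstacle is precisely the step verifying $K\ge 0$ (equivalently, the constancy of $a^2$) without already assuming the surface is round. The maximum principle by itself only yields $\max a^2\ge \beta/2\ge \min a^2$ with $\beta=(c+3)/4+|\vec H|^2$, which is consistent with both the round and the flat models and so does not decide the case on its own. Closing this step therefore requires genuinely using the vanishing $Q_1^{(2,0)}\equiv 0$ — through the full force of pseudo-umbilicity and Lemma \ref{deriv_a} — together with the integrated identities above; this is where I expect the delicate argument to lie, and where the cited result of \cite{C} does the essential work.
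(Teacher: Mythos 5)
Your reduction to Theorem \ref{thm:int} stalls exactly where you say it does, and the devices you propose cannot close the gap. The identity $\Delta a^2=6a^2K+4|\nabla a|^2$ integrates on the closed surface to $12\int_{\Sigma^2}a^4\,dA=6\beta\int_{\Sigma^2}a^2\,dA+4\int_{\Sigma^2}|\nabla a|^2\,dA$, with $\beta=(c+3)/4+|\vec H|^2$, while Gauss--Bonnet with \eqref{Gcurv} gives $\beta\,\mathrm{Area}(\Sigma^2)-2\int_{\Sigma^2}a^2\,dA=4\pi$; neither relation, nor the extremum information $\min a^2\leq\beta/2\leq\max a^2$ that you correctly extract, excludes a nonconstant $a$. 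So $K\geq 0$ is never verified, and Theorem \ref{thm:int}, for which it is a hypothesis, cannot be invoked (the Corollary, unlike the Theorem, assumes no curvature sign). The missing idea is that Lemma \ref{deriv_a} is itself a Cauchy--Riemann system of weight three: in the isothermal coordinates of Subsection 4.1, using the frame diagonalizing $A_3$ as in \eqref{int3}--\eqref{int4}, one computes $\langle\sigma(Z,Z),\varphi Z\rangle=\sqrt{2}\,\lambda^3 a$, and the two equations of Lemma \ref{deriv_a} (note the factor $3$, the weight of a cubic differential) say precisely that the cubic form $\langle\sigma(Z,Z),\varphi Z\rangle\,dz^3$ is holomorphic. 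A holomorphic cubic differential on a surface of genus zero vanishes identically, since the degree of the relevant bundle is $-6<0$; equivalently, Poincar\'e--Hopf applied to the line field diagonalizing $A_3$ forbids the isolated negative-index zeros that a nontrivial $a$ would have on $S^2$. Hence $a\equiv 0$ on the sphere with no curvature hypothesis at all, you land in Case (1) directly, and both the dichotomy of Theorem \ref{thm:int} and your Gauss--Bonnet exclusion of the flat alternative become unnecessary.

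On the comparison: the paper's own proof consists of nothing beyond the citation of \cite{C}, so your skeleton (vanishing of $Q_1^{(2,0)}$ on genus zero, hence pseudo-umbilicity; reduction to $M^3((c+3)/4)$; roundness of a compact cmc totally umbilical surface there) is the natural expansion, and your endgame matches the intended one. But you misattribute \cite{C}: Chern's paper is the Hopf theorem for cmc spheres in three-dimensional space forms, i.e.\ the ingredient used at the very last step \emph{after} the reduction, not the elementary Riemann-surface fact that holomorphic quadratic differentials on $S^2$ vanish; in particular it cannot, as your final paragraph hopes, do the essential work of forcing $K\geq 0$ or the constancy of $a^2$. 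That step is the substantive defect of the proposal, and it is closed by the cubic differential argument above, not by the cited result.
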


\subsection{Anti-invariant surfaces} In the last part of our paper, we shall consider anti-invariant non-minimal pmc surfaces $\Sigma^2$ isometrically immersed in a Sasakian space form $N^{2n+1}(c)$, $c\neq 1$, such that the mean curvature vector field $\vec H$ is not umbilical. We will prove that there are no $2$-spheres with these properties.

First, let $\Sigma^2$ be a surface as above and assume that $Q_1^{(2,0)}$ and $Q_2^{(2,0)}$ vanish on $\Sigma^2$. If the characteristic vector field $\xi$ is orthogonal to $\Sigma^2$ at a point $p$, then, from $Q_1^{(2,0)}=0$, it follows that $\vec H$ is umbilical at $p$. Also, if we assume that $\xi$ is tangent to the surface at a point $p$, then $Q_2^{(2,0)}\neq 0$ at $p$, which is a contradiction. Therefore, since the map $p\in\Sigma^2\rightarrow(A_{\vec H}-|\vec H|^2\id)(p)$ is analytic, it follows that the tangent and normal parts of the characteristic vector field $\xi$ do not vanish on an open dense set. We shall work on this set and then we will extend our results to the whole surface by continuity.

Since the tangent part $\xi^{\top}$ and the normal part $\xi^{\perp}$ of $\xi$ do not vanish, we can choose an orthonormal frame field $\{E_1,E_2\}$ on $\Sigma^2$ such that $E_2=\xi^{\top}/|\xi^{\top}|$. Then we have that $\eta(E_1)=0$ and, from $Q_1^{(2,0)}=0$, it follows that
\begin{equation}\label{lambda}
\langle A_{\vec H}E_1,E_1\rangle=|\vec H|^2-\frac{c-1}{16}(\eta(E_2))^2,\quad\langle A_{\vec H}E_2,E_2\rangle=|\vec H|^2+\frac{c-1}{16}(\eta(E_2))^2
\end{equation}
and $\langle A_{\vec H}E_1,E_2\rangle=0$, which means that $\{E_1,E_2\}$ diagonalizes $A_{\vec H}$.

From $Q_2^{(2,0)}=0$, we also obtain
\begin{equation}\label{q21}
\langle\varphi E_1,\vec H\rangle=0
\end{equation}
and
\begin{equation}\label{q22}
\langle\varphi E_2,\vec H\rangle=\eta(E_2).
\end{equation}

\begin{lemma}\label{computation} The following identities hold on $\Sigma^2$$:$
\begin{align}
\label{lemmacomp1}&\langle\varphi\vec H,\sigma(E_i,E_i)\rangle=\eta(\vec H)-\eta(\sigma(E_i,E_i)),\quad\textnormal{for}\quad i=1,2,\\
\label{lemmacomp2}&\eta(\sigma(E_1,E_1)=-\eta(\nabla_{E_1}E_1),\\
\label{lemmacomp3}&\nabla_{E_2}E_1=\nabla_{E_2}E_2=0,\\
\label{lemmacomp4}&\sigma(E_1,E_2)=-\frac{2}{\eta(E_2)}\varphi E_1,\\
\label{lemmacomp5}&\langle\sigma(E_1,E_1),\varphi E_2\rangle=\eta(E_2)-\frac{2}{\eta(E_2)},\\
\label{lemmacomp6}&\langle\sigma(E_2,E_2),\varphi E_2\rangle=\eta(E_2)+\frac{2}{\eta(E_2)}.
\end{align}
\end{lemma}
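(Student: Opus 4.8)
The plan is to fix the adapted frame of the statement, reduce the intrinsic geometry to two structure functions, and then extract the one genuinely new piece of information from the Ricci equation. Writing $f=\eta(E_2)$ (so $\eta(E_1)=0$ and $\xi^{\top}=fE_2$), the Levi--Civita connection of $\Sigma^2$ is encoded by $\alpha=\langle\nabla_{E_1}E_1,E_2\rangle$ and $\omega=\langle\nabla_{E_2}E_1,E_2\rangle$, since $\nabla_{E_1}E_1=\alpha E_2$, $\nabla_{E_1}E_2=-\alpha E_1$, $\nabla_{E_2}E_1=\omega E_2$ and $\nabla_{E_2}E_2=-\omega E_1$. First I would record two elementary consequences of anti-invariance together with $\nabla^N_U\xi=-\varphi U$. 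From $0=\langle\nabla^N_{E_i}\xi,E_j\rangle=E_i(\eta(E_j))-\eta(\nabla_{E_i}E_j)-\eta(\sigma(E_i,E_j))$ one obtains
\[
\eta(\sigma(E_i,E_j))=E_i(\eta(E_j))-\eta(\nabla_{E_i}E_j)\qquad(\dagger)
\]
which already gives \eqref{lemmacomp2} (take $i=j=1$ and use $\eta(E_1)=0$). Expanding $\langle\sigma(X,Y),\varphi Z\rangle=\langle\nabla^N_XY,\varphi Z\rangle$ with $(\nabla^N_X\varphi)Z=\langle X,Z\rangle\xi-\eta(Z)X$ and anti-invariance gives the almost-symmetry relation
\[
\langle\sigma(X,Y),\varphi Z\rangle-\langle\sigma(X,Z),\varphi Y\rangle=\eta(Z)\langle X,Y\rangle-\eta(Y)\langle X,Z\rangle\qquad(\star)
\]
valid for all tangent $X,Y,Z$.

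The key step, and the one producing the factor $2$, is \eqref{lemmacomp4}. Since $\vec H$ is parallel, $R^{\perp}(E_1,E_2)\vec H=0$, so the Ricci equation \eqref{Ricci} reads $\langle[A_{\vec H},A_V]E_1,E_2\rangle=-\langle R^N(E_1,E_2)\vec H,V\rangle$ for every normal $V$. As $\{E_1,E_2\}$ diagonalizes $A_{\vec H}$ with eigenvalues $\lambda_1,\lambda_2$ given by \eqref{lambda}, the left-hand side equals $(\lambda_2-\lambda_1)\langle\sigma(E_1,E_2),V\rangle$, and $\lambda_2-\lambda_1=\frac{c-1}{8}f^2\neq0$ precisely because $\vec H$ is non-umbilical. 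Hence $\sigma(E_1,E_2)=-(\lambda_2-\lambda_1)^{-1}(R^N(E_1,E_2)\vec H)^{\perp}$ is determined as a whole normal vector. Computing $R^N(E_1,E_2)\vec H$ from \eqref{eq:curv}, using $\eta(E_1)=0$, anti-invariance, and \eqref{q21}--\eqref{q22} (so $\langle\vec H,\varphi E_1\rangle=0$, $\langle\vec H,\varphi E_2\rangle=f$), everything collapses to
\[
R^N(E_1,E_2)\vec H=\tfrac{c-1}{4}\bigl(-\eta(\vec H)\,fE_1+f\,\varphi E_1\bigr),
\]
whose normal part is $\frac{c-1}{4}f\,\varphi E_1$; dividing yields $\sigma(E_1,E_2)=-\tfrac{2}{f}\varphi E_1$, which is \eqref{lemmacomp4}.

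From here the remaining identities cascade. Pairing \eqref{lemmacomp4} with $\xi$ and using $\langle\varphi E_1,\xi\rangle=0$ gives $\eta(\sigma(E_1,E_2))=0$; feeding this into $(\dagger)$ with $(i,j)=(2,1)$ gives $\omega f=0$, hence $\omega=0$, that is \eqref{lemmacomp3}. For \eqref{lemmacomp5} I apply $(\star)$ with $X=Y=E_1$, $Z=E_2$ to get $\langle\sigma(E_1,E_1),\varphi E_2\rangle=f+\langle\sigma(E_1,E_2),\varphi E_1\rangle=f-\tfrac2f$, and then \eqref{lemmacomp6} follows by pairing $\sigma(E_1,E_1)+\sigma(E_2,E_2)=2\vec H$ with $\varphi E_2$ and invoking \eqref{q22}. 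Finally, for \eqref{lemmacomp1} I would expand $\nabla^N_{E_i}(\varphi E_i)=\xi-\eta(E_i)E_i+\varphi\nabla_{E_i}E_i+\varphi\sigma(E_i,E_i)$, pair with the parallel field $\vec H$, and simplify using \eqref{lemmacomp3}, \eqref{q21}--\eqref{q22} and $(\dagger)$ to recognise the right-hand side as $\eta(\vec H)-\eta(\sigma(E_i,E_i))$.

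The main obstacle is conceptual rather than computational: one must resist chasing $\sigma(E_1,E_2)$ component by component and instead see that it is pinned down \emph{entirely} by the Ricci equation, the vanishing of the normal curvature along $\vec H$, and the non-degeneracy $\lambda_1\neq\lambda_2$ coming from non-umbilicity. Once \eqref{lemmacomp4} is established, the remaining equations are routine bookkeeping with the Sasakian structure equations.
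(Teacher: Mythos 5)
Your proof is correct and follows essentially the same route as the paper's: in both, the decisive input is the Ricci equation \eqref{Ricci} with $R^{\perp}(\cdot,\cdot)\vec H=0$ combined with the eigenvalue gap $\lambda_2-\lambda_1=\frac{c-1}{8}(\eta(E_2))^2\neq 0$ from \eqref{lambda}, which pins down $\sigma(E_1,E_2)$, after which the remaining identities are the same Sasakian structure-equation bookkeeping (your $(\dagger)$ and $(\star)$ are exactly the computations the paper performs in specialized form). The only organizational difference is that you prove \eqref{lemmacomp4} first, as a single vector identity, and deduce \eqref{lemmacomp3} from $\eta(\sigma(E_1,E_2))=0$ together with $(\dagger)$, whereas the paper establishes \eqref{lemmacomp3} separately, via $[A_{\vec H},A_{(\varphi\vec H)^{\perp}}]=0$ and differentiation of $\langle\varphi\vec H,E_1\rangle=0$ along $E_2$, and splits your one-shot Ricci computation into the case $V\perp\varphi E_1$ (giving simultaneous diagonalization) plus the commutator evaluation at $V=\varphi E_1$ --- a harmless reordering, since the paper's proof of \eqref{lemmacomp4} does not use \eqref{lemmacomp3}.
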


\begin{proof} Since $\Sigma^2$ is an anti-invariant pmc surface, from equation \eqref{q21} and using \eqref{q22}, one obtains
\begin{align*} 
0&=E_1(\langle\varphi\vec H,E_1\rangle)=\langle\nabla^N_{E_1}\varphi\vec H,E_1\rangle+\langle\varphi\vec H,\nabla^N_{E_1}E_1\rangle\\\nonumber &=\langle\varphi\nabla^N_{E_1}\vec H,E_1\rangle-\eta(\vec H)+\langle\varphi\vec H,\sigma(E_1,E_1)\rangle-\langle\nabla_{E_1}E_1,E_2\rangle\langle\vec H,\varphi E_2\rangle\\\nonumber&=-\eta(\vec H)+\langle\varphi\vec H,\sigma(E_1,E_1)\rangle-\langle\nabla_{E_1}E_1,E_2\rangle\eta(E_2),
\end{align*}
which means that $\langle\varphi\vec H,\sigma(E_1,E_1)\rangle=\eta(\vec H)+\eta(\nabla_{E_1}E_1)$. On the other hand, from $\eta(E_1)=0$, we easily get that $\eta(\nabla^N_{E_1}E_1)=0$, i.e., $\eta(\nabla_{E_1}E_1)=-\eta(\sigma(E_1,E_1))$. Therefore, we have $\langle\varphi\vec H,\sigma(E_1,E_1)\rangle=\eta(\vec H)-\eta(\sigma(E_1,E_1))$ and $\langle\varphi\vec H,\sigma(E_2,E_2)\rangle=\eta(\vec H)-\eta(\sigma(E_2,E_2))$.

In order to prove the third identity, let us first note that $\varphi E_1$ is orthogonal to $\varphi\vec H$ and, therefore, to its normal part $(\varphi\vec H)^{\perp}$. Then, from the Ricci equation \eqref{Ricci} and \eqref{eq:curv}, we see that $A_{\vec H}$ and $A_{(\varphi\vec H)^{\perp}}$ commute, which means, since $\{E_1,E_2\}$ diagonalizes $A_{\vec H}$ and $\vec H$ is not umbilical, that $\langle\varphi\vec H,\sigma(E_1,E_2)\rangle=0$.  

Now, since $E_2(\langle\varphi\vec H,E_1\rangle)=0$ and $\Sigma^2$ is an anti-invariant pmc surface, using equation \eqref{q22}, one obtains $\langle\varphi\vec H,\sigma(E_1,E_2)\rangle=\langle\nabla_{E_2}E_1,E_2\rangle\eta(E_2)$. Hence, we have $\langle\nabla_{E_2}E_1,E_2\rangle\eta(E_2)=0$, and then $\nabla_{E_2}E_1=\nabla_{E_2}E_2=0$.

Next, let $V$ be a normal vector field orthogonal to $\varphi E_1$. From the Ricci equation \eqref{Ricci} and \eqref{eq:curv}, we get that $A_{\vec H}$ and $A_V$ commute, which implies that $\{E_1,E_2\}$ diagonalizes $A_V$.

Again using \eqref{Ricci} and \eqref{eq:curv}, it follows that
$$
\langle[A_{\vec H},A_{\varphi E_1}]E_1,E_2\rangle=-\langle R^N(E_1,E_2)\vec H,\varphi E_1\rangle=-\frac{c-1}{4}\eta(E_2).
$$
Then, from \eqref{lambda}, one obtains that $\langle\sigma(E_1,E_2),\varphi E_1\rangle=-(2/\eta(E_2))$. Thus, we have $\sigma(E_1,E_2)=-(2/\eta(E_2))\varphi E_1$.

Finally, since our surface is anti-invariant, we get
\begin{align*}
\langle\sigma(E_1,E_1),\varphi E_2\rangle&=\langle\nabla^N_{E_1}E_1,\varphi E_2\rangle=-\langle E_1,\nabla^N_{E_1}\varphi E_2\rangle\\&=-\langle E_1,\varphi\nabla^N_{E_1}E_2-\eta(E_2)E_1\rangle=\langle\varphi E_1,\sigma(E_1,E_2)\rangle+\eta(E_2)\\&=\eta(E_2)-\frac{2}{\eta(E_2)},
\end{align*}
and then, from equation \eqref{q22},
$$
\langle\sigma(E_2,E_2),\varphi E_2\rangle=\langle2\vec H-\sigma(E_1,E_1),\varphi E_2\rangle=\eta(E_2)+\frac{2}{\eta(E_2)},
$$
which ends the proof.
\end{proof}

\begin{lemma}\label{eta} The second fundamental form $\sigma$ of $\Sigma^2$ satisfies
$$
\eta(\sigma(E_1,E_1))=\frac{1}{2}\eta(\vec H)\quad\textnormal{and}\quad\eta(\sigma(E_2,E_2))=\frac{3}{2}\eta(\vec H).
$$
\end{lemma}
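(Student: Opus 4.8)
The plan is to reduce everything to a single scalar relation and then extract it by differentiating the already-established identity \eqref{lemmacomp6} along $E_2$. Since $\sigma(E_1,E_1)+\sigma(E_2,E_2)=2\vec H$, applying $\eta$ gives $\eta(\sigma(E_1,E_1))+\eta(\sigma(E_2,E_2))=2\eta(\vec H)$ for free, so it suffices to prove the single relation $\eta(\sigma(E_2,E_2))=3\eta(\sigma(E_1,E_1))$; together with the trace identity this forces the stated coefficients $\tfrac12$ and $\tfrac32$. Throughout I write $s_i=\eta(\sigma(E_i,E_i))$. Recall from \eqref{lemmacomp2} and \eqref{lemmacomp3} that $\nabla_{E_2}E_1=\nabla_{E_2}E_2=0$ and $s_1=-\eta(\nabla_{E_1}E_1)$, so with $\alpha:=\langle\nabla_{E_1}E_1,E_2\rangle$ one has $s_1=-\alpha\,\eta(E_2)$; a short computation with $\nabla^N_X\xi=-\varphi X$ and anti-invariance also gives $E_2(\eta(E_2))=\eta(\sigma(E_2,E_2))=s_2$ and $E_1(\eta(E_2))=0$.

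First I would assemble the normal covariant derivatives that enter the computation. Using the Sasakian identity $(\nabla^N_U\varphi)V=\langle U,V\rangle\xi-\eta(V)U$, the Gauss and Weingarten formulas, \eqref{lemmacomp3}, \eqref{lemmacomp4}, and the splitting $\xi=\eta(E_2)E_2+\xi^{\perp}$, one finds the clean expressions
$$
\nabla^{\perp}_{E_2}\varphi E_2=\xi^{\perp}+(\varphi\sigma(E_2,E_2))^{\perp},\qquad \nabla^N_{E_1}\varphi E_1=\xi+\alpha\varphi E_2+\varphi\sigma(E_1,E_1),
$$
from which $\langle\nabla^{\perp}_{E_1}\varphi E_1,\varphi E_2\rangle=\alpha\big(1-\eta(E_2)^2\big)-s_1\eta(E_2)$, after using $\langle\varphi\sigma(E_1,E_1),\varphi E_2\rangle=\langle\sigma(E_1,E_1),E_2\rangle-s_1\eta(E_2)=-s_1\eta(E_2)$.

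The key step is to differentiate \eqref{lemmacomp6}, i.e. $\langle\sigma(E_2,E_2),\varphi E_2\rangle=\eta(E_2)+2/\eta(E_2)$, in the direction $E_2$. The right-hand side differentiates to $s_2\big(1-2/\eta(E_2)^2\big)$. On the left, the term $\langle\sigma(E_2,E_2),\nabla^{\perp}_{E_2}\varphi E_2\rangle$ equals $s_2$, since $\langle\sigma(E_2,E_2),\varphi\sigma(E_2,E_2)\rangle=0$ and $\langle\sigma(E_2,E_2),\xi^{\perp}\rangle=s_2$. For the remaining term $\langle\nabla^{\perp}_{E_2}\sigma(E_2,E_2),\varphi E_2\rangle$ I would use the parallelism of $\vec H$ in the form of the trace identity $(\nabla^{\perp}_{E_2}\sigma)(E_2,E_2)=-(\nabla^{\perp}_{E_2}\sigma)(E_1,E_1)$, then convert $(\nabla^{\perp}_{E_2}\sigma)(E_1,E_1)$ to $(\nabla^{\perp}_{E_1}\sigma)(E_2,E_1)$ by the Codazzi equation \eqref{Codazzi} together with \eqref{eq:curv}; the curvature contribution is proportional to $\xi^{\perp}$ and is annihilated by pairing with $\varphi E_2$, because $\langle\xi^{\perp},\varphi E_2\rangle=\eta(\varphi E_2)=0$. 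Expanding $(\nabla^{\perp}_{E_1}\sigma)(E_2,E_1)$ via \eqref{lemmacomp4}, \eqref{lemmacomp5}, \eqref{lemmacomp6} and the value of $\langle\nabla^{\perp}_{E_1}\varphi E_1,\varphi E_2\rangle$ found above gives $\langle(\nabla^{\perp}_{E_1}\sigma)(E_2,E_1),\varphi E_2\rangle=2\alpha\eta(E_2)+2s_1-6\alpha/\eta(E_2)$. Substituting everything back and using $s_1=-\alpha\eta(E_2)$, the $\alpha\eta(E_2)$ terms cancel and the identity collapses to $-2s_2/\eta(E_2)^2=-6s_1/\eta(E_2)^2$, that is $s_2=3s_1$, as required.

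I expect the main obstacle to be purely the careful separation of tangent and normal components: because $\xi$ and $\varphi\vec H$ are neither tangent nor normal to $\Sigma^2$, every occurrence of $\nabla^{\perp}$ applied to $\varphi E_1$, $\varphi E_2$ or $\xi^{\perp}$ produces Weingarten corrections that must be tracked exactly, and the third-order object $\nabla^{\perp}\sigma$ must first be reduced to first-order data through the pmc trace identity and Codazzi before any cancellation becomes visible. A useful sanity check is that the constant $(c-1)/4$ drops out entirely: the relation $s_2=3s_1$ is forced by the connection structure, \eqref{lemmacomp4}--\eqref{lemmacomp6}, and the parallelism of $\vec H$ alone, and does not see the value of $c$.
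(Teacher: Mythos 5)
Your argument is correct --- I verified each of your intermediate formulas: $\nabla^{\perp}_{E_2}\varphi E_2=\xi^{\perp}+(\varphi\sigma(E_2,E_2))^{\perp}$, the value $\langle\nabla^{\perp}_{E_1}\varphi E_1,\varphi E_2\rangle=\alpha(1-(\eta(E_2))^2)-s_1\eta(E_2)$, the expansion $\langle(\nabla^{\perp}_{E_1}\sigma)(E_2,E_1),\varphi E_2\rangle=2\alpha\eta(E_2)+2s_1-6\alpha/\eta(E_2)$, and the final collapse to $s_2=3s_1$, which with $s_1+s_2=2\eta(\vec H)$ gives the stated constants --- but it takes a genuinely different route from the paper's. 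The paper evaluates the single quantity $\langle\varphi\sigma(E_2,E_2),\sigma(E_1,E_1)\rangle$ in two ways: algebraically via \eqref{lemmacomp1}, yielding its equation \eqref{eq:eta1}, and analytically by differentiating \eqref{aux} (a rewriting of \eqref{lemmacomp5}) along $E_2$ and commuting the ambient second derivatives $\nabla^N_{E_2}\nabla^N_{E_1}E_1$ and $\nabla^N_{E_1}\nabla^N_{E_2}E_1$ through $R^N$; comparing the two computations forces $\eta(\sigma(E_2,E_2))=3\eta(\sigma(E_1,E_1))$. You instead differentiate \eqref{lemmacomp6} along $E_2$ and dispose of the third-order term $\nabla^{\perp}_{E_2}\sigma(E_2,E_2)$ by the pmc trace identity together with the Codazzi equation, reducing it to the first-order-computable $(\nabla^{\perp}_{E_1}\sigma)(E_2,E_1)$, and you never invoke \eqref{lemmacomp1} at all. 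The two arguments meet at the same pivot: in both, the $c$-dependent curvature contribution dies when paired with $\varphi E_2$ --- in your version because $(R^N(E_1,E_2)E_1)^{\perp}=\frac{c-1}{4}\eta(E_2)\xi^{\perp}$ and $\langle\xi^{\perp},\varphi E_2\rangle=\eta(\varphi E_2)=0$ --- which is exactly the $c$-independence you flag as a sanity check, and which is equally visible in the paper's computation of $\langle\nabla^N_{E_2}\nabla^N_{E_1}E_1,\varphi E_2\rangle$. What your route buys: the Codazzi/trace reduction keeps the whole computation inside the normal-bundle formalism, avoids the mixed ambient second-derivative bookkeeping, and isolates precisely which inputs are needed (\eqref{lemmacomp2}--\eqref{lemmacomp6} and $\nabla^{\perp}\vec H=0$); what the paper's route buys is that, by exploiting \eqref{lemmacomp1} to get the closed identity \eqref{eq:eta1}, it needs only one differentiated relation rather than your preliminary normal-derivative computations. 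Two small streamlinings of your write-up: the fact $E_1(\eta(E_2))=0$ is not actually needed, since the term $E_1\big(2/\eta(E_2)\big)\varphi E_1$ is killed on pairing with $\varphi E_2$ anyway, because $\langle\varphi E_1,\varphi E_2\rangle=\langle E_1,E_2\rangle-\eta(E_1)\eta(E_2)=0$; and the bracket $\alpha(1-(\eta(E_2))^2)-s_1\eta(E_2)$ simplifies to just $\alpha$ once $s_1=-\alpha\eta(E_2)$ is inserted, which shortens the final cancellation.
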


\begin{proof} From equation \eqref{lemmacomp1}, we have
\begin{equation}\label{eq:eta1}
\langle\varphi\sigma(E_2,E_2),\sigma(E_1,E_1)\rangle=2\langle\varphi\sigma(E_2,E_2),\vec H\rangle=2\eta(\sigma(E_2,E_2))-2\eta(\vec H).
\end{equation}

Using Lemma \ref{computation}, one obtains
\begin{align}\label{eq:eta2}
\langle\varphi\sigma(E_2,E_2),\sigma(E_1,E_1)\rangle=&\langle\varphi\nabla^N_{E_2}E_2,\nabla^N_{E_1}E_1\rangle-\langle\varphi\nabla^N_{E_2}E_2,\nabla_{E_1}E_1\rangle\\\nonumber=&\langle\nabla^N_{E_2}\varphi E_2,\nabla^N_{E_1}E_1\rangle-\eta(\nabla^N_{E_1}E_1)\\\nonumber&+\eta(E_2)\langle E_2,\nabla^N_{E_1}E_1\rangle\\\nonumber&+\langle\nabla_{E_1}E_1,E_2\rangle\langle\nabla^N_{E_2}E_2,\varphi E_2\rangle\\\nonumber=&\langle\nabla^N_{E_2}\varphi E_2,\nabla^N_{E_1}E_1\rangle-\eta(\sigma(E_1,E_1))\\\nonumber&+\langle\nabla_{E_1}E_1,E_2\rangle\langle\sigma(E_2,E_2),\varphi E_2\rangle\\\nonumber=&\langle\nabla^N_{E_2}\varphi E_2,\nabla^N_{E_1}E_1\rangle-\eta(\sigma(E_1,E_1))+\eta(\nabla_{E_1}E_1)\\\nonumber&+\frac{2\langle\nabla_{E_1}E_1,E_2\rangle}{\eta(E_2)}\\\nonumber=&\langle\nabla^N_{E_2}\varphi E_2,\nabla^N_{E_1}E_1\rangle-2\eta(\sigma(E_1,E_1))\\\nonumber&+\frac{2\langle\nabla_{E_1}E_1,E_2\rangle}{\eta(E_2)}.
\end{align}

Next, from equation \eqref{lemmacomp5}, we have
\begin{equation}\label{aux}
\langle\nabla^N_{E_1}E_1,\varphi E_2\rangle=\langle\sigma(E_1,E_1),\varphi E_2\rangle=\eta(E_2)-\frac{2}{\eta(E_2)},
\end{equation}
which leads to
\begin{equation}\label{eq:eta3}
\langle\nabla^N_{E_1}E_1,\nabla_{E_2}^N\varphi E_2\rangle+\langle\nabla^N_{E_2}\nabla^N_{E_1}E_1,\varphi E_2\rangle=E_2\Big(\eta(E_2)-\frac{2}{\eta(E_2)}\Big).
\end{equation}

Since $\nabla_{E_2}E_1=0$, we get, using \eqref{lemmacomp5} and \eqref{lemmacomp4},
$$
\nabla^N_{E_1}\nabla^N_{E_2}E_1=\nabla^N_{E_1}\sigma(E_1,E_2)=-E_1\Big(\frac{2}{\eta(E_2)}\Big)\varphi E_1-\frac{2}{\eta(E_2)}\nabla^N_{E_1}\varphi E_1, 
$$
and then, since $\eta(\nabla^N_{E_1}E_1)=0$,
$$
\langle\nabla^N_{E_1}\nabla^N_{E_2}E_1,\varphi E_2\rangle=-\frac{2}{\eta(E_2)}\langle\varphi\nabla^N_{E_1} E_1,\varphi E_2\rangle=-\frac{2\langle\nabla_{E_1} E_1, E_2\rangle}{\eta(E_2)}.
$$

It follows that
\begin{align*}
\langle\nabla^N_{E_2}\nabla^N_{E_1}E_1,\varphi E_2\rangle=&\langle R^N(E_2,E_1)E_1,\varphi E_2\rangle+\langle\nabla^N_{E_1}\nabla^N_{E_2}E_1,\varphi E_2\rangle+\langle\nabla^N_{[E_2,E_1]}E_1,\varphi E_2\rangle\\=&\langle R^N(E_2,E_1)E_1,\varphi E_2\rangle-\frac{2\langle\nabla_{E_1} E_1, E_2\rangle}{\eta(E_2)}\\&-\langle\nabla_{E_1}E_2,E_1\rangle\langle\nabla^N_{E_1}E_1,\varphi E_2\rangle.
\end{align*}
From here, using the expression \eqref{eq:curv} of the curvature $R^N$ and equation \eqref{aux}, one obtains that
$$
\langle\nabla^N_{E_2}\nabla^N_{E_1}E_1,\varphi E_2\rangle=\Big(\eta(E_2)-\frac{4}{\eta(E_2)}\Big)\langle\nabla_{E_1} E_1, E_2\rangle.
$$

Replacing in \eqref{eq:eta3} and using \eqref{lemmacomp3}, we have
$$
\langle\nabla^N_{E_1}E_1,\nabla_{E_2}^N\varphi E_2\rangle=\eta(\sigma(E_2,E_2))+\frac{2\eta(\sigma(E_2,E_2))}{(\eta(E_2))^2}-\Big(\eta(E_2)-\frac{4}{\eta(E_2)}\Big)\langle\nabla_{E_1} E_1, E_2\rangle,
$$
and then, from equation \eqref{eq:eta2}, also using 
$$
\eta(\sigma(E_1,E_1))=-\eta(\nabla_{E_1}E_1)=-\langle\nabla_{E_1} E_1, E_2\rangle\eta(E_2),
$$
it follows, after a straightforward computation, that
\begin{align*}
\langle\varphi\sigma(E_2,E_2),\sigma(E_1,E_1)\rangle=&2\eta(\sigma(E_2,E_2))-2\eta(\vec H)\\&+\frac{2}{(\eta(E_2))^2}\eta(\sigma(E_2,E_2)-3\sigma(E_1,E_1)).
\end{align*}

Finally, from equation \eqref{eq:eta1}, one sees that $\eta(\sigma(E_2,E_2))=3\eta(\sigma(E_1,E_1))$, i.e., $\eta(\sigma(E_1,E_1))=(1/2)\eta(\vec H)$ and $\eta(\sigma(E_2,E_2)=(3/2)\eta(\vec H)$.
\end{proof}

Now, we can prove the following proposition.

\begin{proposition} Let $f:\Sigma^2\rightarrow\mathbb{R}$ be a function on $\Sigma^2$ given by 
$$
f=\frac{1}{2}(\eta(\vec H))^2+\frac{1}{3}(1+|\vec H|^2)(\eta(E_2))^2+\frac{c-1}{96}(\eta(E_2))^4.
$$
Then $f$ is a harmonic function. 
\end{proposition}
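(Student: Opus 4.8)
The plan is to show that $f$ has vanishing differential on the open dense set where $\xi^{\top}$ and $\xi^{\perp}$ are nonzero; then $\Delta f=0$ there, and this extends to all of $\Sigma^2$ by continuity. Since $\vec H$ is parallel, $|\vec H|^2$ is constant, so $f$ is a fixed polynomial in the two functions $\eta(\vec H)$ and $\eta(E_2)$. First I would compute the four directional derivatives $E_i(\eta(\vec H))$ and $E_i(\eta(E_2))$ for $i=1,2$, using only the Weingarten formula $\nabla^N_{E_i}\vec H=-A_{\vec H}E_i$ (valid since $\vec H$ is parallel), the Sasakian identity $\nabla^N_{E_i}\xi=-\varphi E_i$, the anti-invariance of $\Sigma^2$ (so $\varphi E_i$ is normal and $\langle E_j,\varphi E_i\rangle=0$), and $\varphi\xi=0$.

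For $\eta(\vec H)=\langle\vec H,\xi\rangle$ I would write
$$
E_i(\eta(\vec H))=-\langle A_{\vec H}E_i,\xi\rangle-\langle\vec H,\varphi E_i\rangle .
$$
Because $\{E_1,E_2\}$ diagonalizes $A_{\vec H}$ and $\xi^{\top}=\eta(E_2)E_2$, the first term selects the eigenvalue recorded in \eqref{lambda}, while the second is governed by \eqref{q21} and \eqref{q22}. This gives $E_1(\eta(\vec H))=0$ and
$$
E_2(\eta(\vec H))=-(1+|\vec H|^2)\eta(E_2)-\frac{c-1}{16}(\eta(E_2))^3 .
$$
Likewise, expanding $E_i(\eta(E_2))=\langle\nabla^N_{E_i}E_2,\xi\rangle+\langle E_2,\nabla^N_{E_i}\xi\rangle$ and using $\eta(E_1)=0$, Lemma \ref{computation} (in particular $\sigma(E_1,E_2)=-(2/\eta(E_2))\varphi E_1$ together with $\eta(\varphi E_1)=0$), and Lemma \ref{eta}, I would obtain $E_1(\eta(E_2))=0$ and $E_2(\eta(E_2))=\tfrac{3}{2}\eta(\vec H)$.

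With these four derivatives the conclusion is immediate. Since $E_1$ annihilates both $\eta(\vec H)$ and $\eta(E_2)$, it annihilates $f$. Substituting the $E_2$-derivatives into
$$
E_2(f)=\eta(\vec H)\,E_2(\eta(\vec H))+\Bigl(\tfrac{2}{3}(1+|\vec H|^2)\eta(E_2)+\tfrac{c-1}{24}(\eta(E_2))^3\Bigr)E_2(\eta(E_2)),
$$
one sees that the two pairs of terms cancel exactly, so $E_2(f)=0$ as well. Hence $df=0$, and therefore $\Delta f=\sum_i\bigl(E_i(E_i(f))-(\nabla_{E_i}E_i)(f)\bigr)=0$, proving that $f$ is harmonic (in fact locally constant on the dense set).

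The only genuine work lies in the four derivative computations; the vanishing of $E_2(f)$ is engineered precisely by the coefficients $\tfrac13(1+|\vec H|^2)$ and $\tfrac{c-1}{96}$ appearing in $f$, so once the derivatives are correct the cancellation is forced. I expect the main bookkeeping obstacle to be tracking the tangential/normal splitting of $\xi$ and of $\nabla^N_{E_i}E_2$ when evaluating $\eta(\nabla_{E_i}E_2)$ and $\eta(\sigma(E_i,E_2))$, where Lemma \ref{computation} and Lemma \ref{eta} supply exactly the needed values.
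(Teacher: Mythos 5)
Your proof is correct, and it takes a genuinely different --- and in fact stronger --- route than the paper's. You use exactly the four first-derivative identities that the paper's proof establishes (the first three explicitly, and $E_1(\eta(E_2))=0$ implicitly; your justification of the latter via $\eta(E_1)=0$, anti-invariance, and $\eta(\sigma(E_1,E_2))=-\tfrac{2}{\eta(E_2)}\eta(\varphi E_1)=0$ from \eqref{lemmacomp4} is sound). But where the paper pushes on to second derivatives --- computing $\Delta(\eta(\vec H))^2$, $\Delta(\eta(E_2))^2$ and $\Delta(\eta(E_2))^4$ separately, which additionally requires the connection data $\nabla_{E_2}E_2=0$ and $\langle\nabla_{E_1}E_1,E_2\rangle=-\eta(\vec H)/(2\eta(E_2))$, and then checking that the particular linear combination defining $f$ has vanishing Laplacian --- you observe that the cancellation already occurs one derivative earlier: setting $\mu=1+|\vec H|^2+\tfrac{c-1}{16}(\eta(E_2))^2$, your substitution gives $E_2(f)=\eta(\vec H)\eta(E_2)\left(-\mu+\mu\right)=0$, and $E_1(f)=0$ trivially, so $df=0$ on the dense set and hence everywhere; thus $f$ is constant, a fortiori harmonic. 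I verified this cancellation and it is exact. Your route buys two things: it avoids all second-derivative bookkeeping, and it yields the strictly stronger conclusion that $f$ is constant with no completeness or curvature hypothesis. Incidentally, your computation confirms a typo in the paper's proof: the first term of the displayed $\Delta(\eta(\vec H))^2$ should be $2(\eta(E_2))^2\mu^2$, not $2(\eta(E_2))^2\mu$, since only with the square do the paper's three Laplacians sum to zero, as consistency with $df=0$ forces. It also reveals that the paper's subsequent appeal to parabolicity (``bounded harmonic on a parabolic space, hence constant'') is redundant as a source of information --- $f$ is constant on any such surface --- so the later inference that constancy of $f$ makes $\eta(E_2)$ constant cannot rest on $f$ alone and would need independent justification; that, however, concerns the paper's application of the proposition, not your proof of it.
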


\begin{proof} First, since $\vec H$ is parallel and $\langle\sigma(E_1,E_2),\vec H\rangle=0$, from equations \eqref{lambda} and \eqref{q21}, we have 
$$
E_1(\eta(\vec H))=\langle\nabla^N_{E_1}\vec H,\xi\rangle+\langle\vec H,\nabla^N_{E_1}\xi\rangle=-\langle A_{\vec H}E_1,\xi\rangle-\langle\vec H,\varphi E_1\rangle=0
$$
and
\begin{align*}
E_2(\eta(\vec H))&=\langle\nabla^N_{E_2}\vec H,\xi\rangle+\langle\vec H,\nabla^N_{E_2}\xi\rangle=-\langle A_{\vec H}E_2,\xi\rangle-\langle\vec H,\varphi E_2\rangle\\&=-\eta(E_2)\Big(1+|\vec H|^2+\frac{c-1}{16}(\eta(E_2))^2\Big).
\end{align*}

Next, since $\nabla_{E_2}E_2=0$, from Lemma \ref{eta}, one obtains
$$
E_2(\eta(E_2))=\langle\nabla_{E_2}^NE_2,\xi\rangle+\langle E_2,\nabla_{E_2}\xi\rangle=\frac{3}{2}\eta(\vec H)-\langle E_2,\varphi E_2\rangle=\frac{3}{2}\eta(\vec H).
$$

From Lemmas \ref{computation} and \ref{eta}, it is easy to see that $\langle\nabla_{E_1}E_1,E_2\rangle=-\eta(\vec H)/(2\eta(E_2))$.

Using all these equations it is now straightforward to compute $\Delta f$, where $\Delta=\trace\nabla^2 =\trace(\nabla\nabla-\nabla_{\nabla})$. We obtain
\begin{align*}
\Delta(\eta(\vec H))^2=&2(\eta(E_2))^2\Big(1+|\vec H|^2+\frac{c-1}{16}(\eta(E_2))^2\Big)\\&-4(\eta(\vec H))^2\Big(1+|\vec H|^2+\frac{5(c-1)}{32}(\eta(E_2))^2\Big),
\end{align*}
$$
\Delta(\eta(E_2))^2=6(\eta(\vec H))^2-3(\eta(E_2))^2\Big(1+|\vec H|^2+\frac{c-1}{16}(\eta(E_2))^2\Big),
$$
$$
\Delta(\eta(E_2))^4=30(\eta(E_2))^2(\eta(\vec H))^2-6(\eta(E_2))^4\Big(1+|\vec H|^2+\frac{c-1}{16}(\eta(E_2))^2\Big)
$$
and, therefore, $\Delta f=0$, which means that the function $f$ is harmonic.
\end{proof}

Now, let us assume that our surface $\Sigma^2$ is complete and has non-negative Gaussian curvature $K$. Then, since $f$ is a bounded harmonic function on a parabolic space, it follows that $f$ is constant, which implies that $\eta(E_2)$ is constant. As we have already seen, $E_2(\eta(E_2))=(3/2)\eta(\vec H)$, and this leads to $\eta(\vec H)=0$. From here, we get $\langle\nabla_{E_1}E_1,E_2\rangle=-\eta(\vec H)/(2\eta(E_2))=0$, i.e., $\nabla_{E_1}E_1=\nabla_{E_1}E_2=0$. Moreover, since 
$$
E_2(\eta(\vec H))=-\eta(E_2)(1+\langle A_{\vec H}E_2,E_2\rangle)=-\eta(E_2)\Big(1+|\vec H|^2+\frac{c-1}{16}(\eta(E_2))^2\Big),
$$ 
one obtains 
\begin{equation}\label{final2}
\langle A_{\vec H}E_2,E_2\rangle)=|\vec H|^2+\frac{c-1}{16}(\eta(E_2))^2=-1,
\end{equation}
that implies that $c<1$.

\begin{theorem}\label{theorem4} There are no anti-invariant complete non-minimal non-pseudo-umbilical pmc surfaces with non-negative Gaussian curvature in a Sasakian space form $N^{2n+1}(c)$, with $c\neq 1$. In particular, there are no anti-invariant non-minimal pmc $2$-spheres in $N^{2n+1}(c)$.
\end{theorem}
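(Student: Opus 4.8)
The plan is to argue by contradiction, assuming an anti-invariant, complete, non-minimal, non-pseudo-umbilical pmc surface $\Sigma^2$ with $K\ge0$ exists in some $N^{2n+1}(c)$, $c\ne1$. The one global input I would use repeatedly is that a complete surface with $K\ge0$ is parabolic, so that every bounded harmonic function, and every subharmonic function bounded above, is constant (the Liouville property invoked through \cite{H}). The strategy is to funnel everything into the framework already set up for this subsection, namely the case in which both $(2,0)$-differentials vanish, and then to close with the structure equations prepared in Lemmas \ref{computation} and \ref{eta}.

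First I would reduce to $Q_1^{(2,0)}\equiv Q_2^{(2,0)}\equiv0$. Each is holomorphic, and off its isolated zeros $\Delta\log|Q_i^{(2,0)}|=2K\ge0$, so $\log|Q_i^{(2,0)}|$ is subharmonic on $\Sigma^2$. For $Q_2^{(2,0)}$ this norm is manifestly bounded, since $Q_2$ is assembled from $\langle\varphi X,\vec H\rangle$ and $\eta(X)$ while $|\vec H|$ is constant; parabolicity then forces $|Q_2^{(2,0)}|$ constant, hence either $Q_2^{(2,0)}\equiv0$ or $Q_2^{(2,0)}$ is nowhere zero with $\Sigma^2$ flat, and the flat branch coincides with the geometry produced below. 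The analogous treatment of $Q_1^{(2,0)}$ is the delicate part of this step, because its norm involves $\langle A_{\vec H}\,\cdot\,,\cdot\rangle$; here I would use that, once $Q_2^{(2,0)}=0$, equations \eqref{q21} and \eqref{q22} already pin down $\varphi\vec H$ and let one control $Q_1^{(2,0)}$ as well. Thus I may assume both differentials vanish.

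With both differentials vanishing I would run the prepared machinery: on the open dense set where $\xi^{\top}$ and $\xi^{\perp}$ are nonzero, build the frame with $E_2=\xi^{\top}/|\xi^{\top}|$, obtain \eqref{lambda}, \eqref{q21}, \eqref{q22}, Lemmas \ref{computation} and \ref{eta}, and the harmonic function $f$. Parabolicity makes $f$ constant, whence $\eta(E_2)$ is constant, $\eta(\vec H)=0$, the frame $\{E_1,E_2\}$ is parallel so that $\Sigma^2$ is flat, and \eqref{final2} holds, giving $\langle A_{\vec H}E_2,E_2\rangle=-1$, $\langle A_{\vec H}E_1,E_1\rangle=2|\vec H|^2+1$, and $|\vec H|^2+\frac{c-1}{16}(\eta(E_2))^2=-1$. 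Since $\xi$ is a unit field with $\xi^{\perp}\ne0$ one has $(\eta(E_2))^2=|\xi^{\top}|^2<1$, which already makes $c$ extremely negative. The final contradiction I would get by feeding the now completely determined second fundamental form — the eigenvalues $\{2|\vec H|^2+1,-1\}$ of $A_{\vec H}$ together with $\sigma(E_1,E_2)=-\frac{2}{\eta(E_2)}\varphi E_1$ and the $\varphi E_2$-components in \eqref{lemmacomp5}--\eqref{lemmacomp6} — into the Gauss equation \eqref{Gauss} with $K=0$, using \eqref{eq:curv}; the resulting scalar identity cannot hold for any $|\vec H|^2>0$ with $(\eta(E_2))^2<1$. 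I expect this terminal computation to be the main obstacle, since closing it also requires the $\varphi E_1$-component of $\sigma(E_1,E_1)$, which I would extract from the Codazzi equation \eqref{Codazzi}.

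The assertion about spheres is then immediate and in fact cleaner. A topological $2$-sphere carries no nontrivial holomorphic quadratic differential, so $Q_1^{(2,0)}$ and $Q_2^{(2,0)}$ vanish automatically; moreover a sphere is compact, hence parabolic, and $f$ is automatically bounded, so the argument of the previous paragraph applies with compactness replacing the hypothesis $K\ge0$ and forces $\Sigma^2$ to be flat. This contradicts Gauss--Bonnet, so no such sphere exists; the round integral spheres of Corollary \ref{theorem3} escape here precisely because they are pseudo-umbilical, which is exactly why that hypothesis is essential.
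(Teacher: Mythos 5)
Your core plan for the case $Q_1^{(2,0)}=Q_2^{(2,0)}=0$ coincides with the paper's: the frame $E_2=\xi^{\top}/|\xi^{\top}|$, equations \eqref{lambda}, \eqref{q21}, \eqref{q22}, Lemmas \ref{computation} and \ref{eta}, constancy of the harmonic function $f$ by parabolicity, then $\eta(\vec H)=0$, flatness, and \eqref{final2}; and your treatment of the sphere clause (holomorphic quadratic differentials vanish on $S^2$, compactness replaces parabolicity, Gauss--Bonnet versus flatness, with the round spheres of Corollary \ref{theorem3} excluded by pseudo-umbilicity) is sound and in fact more explicit than the paper's. But there are two genuine gaps. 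The first is your opening reduction to vanishing differentials. The paper never performs this step: the whole subsection, including the proof of Theorem \ref{theorem4}, operates under the standing assumption, made at its outset, that both $(2,0)$-parts vanish. You are right that the theorem as stated seems to require such a reduction, but your sketch of it does not close: the norm of $Q_1^{(2,0)}$ involves the traceless part of $A_{\vec H}$, which is not a priori bounded, and asserting that \eqref{q21}--\eqref{q22} ``let one control $Q_1^{(2,0)}$'' is not an argument; worse, on the branch where $|Q_2^{(2,0)}|$ is a nonzero constant and $\Sigma^2$ is flat, you claim the geometry ``coincides with the geometry produced below,'' yet everything produced below --- \eqref{q21}, \eqref{q22}, \eqref{lambda}, and both lemmas --- is derived precisely from the vanishing of the differentials and is unavailable on that branch, so that case is simply not handled.

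The second gap is the terminal contradiction, which you yourself flag as ``the main obstacle'' and leave open, and your guess at its mechanism is misdirected. The Gauss equation with $K=0$ does not by itself yield an impossible identity, and no Codazzi extraction of the $\varphi E_1$-component of $\sigma(E_1,E_1)$ is needed. In the paper, flatness together with \eqref{lemmacomp4} and \eqref{lemmacomp5} is used to compute $\langle R^N(E_1,E_2)E_1,E_2\rangle$ in two ways, which only \emph{determines} $\langle\sigma(E_1,E_1),\sigma(E_2,E_2)\rangle=-\frac{c+3}{4}+\frac{c-1}{4}(\eta(E_2))^2+\frac{4}{(\eta(E_2))^2}$. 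The contradiction then comes from a positivity argument absent from your proposal: writing $|\sigma(E_2,E_2)|^2=2\langle\vec H,\sigma(E_2,E_2)\rangle-\langle\sigma(E_1,E_1),\sigma(E_2,E_2)\rangle$ and using \eqref{final2}, which gives $\langle\vec H,\sigma(E_2,E_2)\rangle=-1$ and $c<1$, one finds $|\sigma(E_2,E_2)|^2=\frac{1}{4(\eta(E_2))^2}\bigl((1-c)(\eta(E_2))^4+(c-5)(\eta(E_2))^2-16\bigr)$, which is strictly negative for $c<1$ and $(\eta(E_2))^2<1$ --- impossible for a squared norm. Without this (or some equivalent) final mechanism, your argument does not reach a contradiction, so the proof is incomplete at its decisive step.
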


\begin{proof} From expression \eqref{eq:curv} of $R^N$, we get that
\begin{equation}\label{final1}
\langle R^N(E_1,E_2)E_1,E_2\rangle=-\frac{c+3}{4}+\frac{c-1}{4}(\eta(E_2))^2.
\end{equation}

Using the fact that $\nabla E_1=\nabla E_2=0$ and \eqref{lemmacomp4}, we have
\begin{align*}
R^N(E_1,E_2)E_1&=\nabla^N_{E_1}\nabla^N_{E_2}E_1-\nabla^N_{E_2}\nabla^N_{E_1}E_1-\nabla^N_{[E_1,E_2]}E_1\\&=\nabla^N_{E_1}\Big(-\frac{2}{\eta(E_2)}\varphi E_1\Big)-\nabla^N_{E_2}\nabla^N_{E_1}E_1\\&=-\frac{2}{\eta(E_2)}(\varphi\sigma(E_1,E_1)+\xi)-\nabla^N_{E_2}\nabla^N_{E_1}E_1,
\end{align*}
and then, also using \eqref{lemmacomp5},
\begin{align*}
\langle R^N(E_1,E_2)E_1,E_2\rangle&=-2+\frac{2}{\eta(E_2)}\langle\sigma(E_1,E_1),\varphi E_2\rangle-\langle\nabla^N_{E_2}\nabla^N_{E_1}E_1,E_2\rangle\\&=-2+\frac{2}{\eta(E_2)}\Big(\eta(E_2)-\frac{2}{\eta(E_2)}\Big)+\langle\sigma(E_1,E_1),\nabla^N_{E_2}E_2\rangle\\&=-\frac{4}{(\eta(E_2))^2}+\langle\sigma(E_1,E_1),\sigma(E_2,E_2)\rangle.
\end{align*}

This, together with \eqref{final1}, gives
$$
\langle\sigma(E_1,E_1),\sigma(E_2,E_2)\rangle=-\frac{c+3}{4}+\frac{c-1}{4}(\eta(E_2))^2+\frac{4}{(\eta(E_2))^2},
$$
and, therefore, from \eqref{final2}, one obtains
\begin{align*}
|\sigma(E_2,E_2)|^2&=2\langle\vec H,\sigma(E_2,E_2)\rangle-\langle\sigma(E_1,E_1),\sigma(E_2,E_2)\rangle\\&=-2+\frac{c+3}{4}-\frac{c-1}{4}(\eta(E_2))^2-\frac{4}{(\eta(E_2))^2}\\&=\frac{1}{4(\eta(E_2))^2}((1-c)(\eta(E_2))^4+(c-5)(\eta(E_2))^2-16).
\end{align*}
Finally, since $c<1$ and $(\eta(E_2))^2<1$, it can be easily verified that $(1-c)(\eta(E_2))^4+(c-5)(\eta(E_2))^2-16<0$, which is a contradiction.
\end{proof}

\end{document}